\documentclass[final, 11pt,reqno]{amsart}
\usepackage{amssymb,amsmath,graphicx,amsfonts,euscript}
\usepackage{color}
\usepackage[pagewise]{lineno}
\usepackage{showkeys}
\usepackage{fancyhdr}
\usepackage[margin=1in]{geometry}

\setlength{\textheight}{8.9in} \setlength{\textwidth}{5.9in}
\setlength{\oddsidemargin}{0.2in} \setlength{\evensidemargin}{0.2in}
\setlength{\parindent}{0.2in}
\setlength{\topmargin}{0.2in} \setcounter{section}{0}
\setcounter{figure}{0} \setcounter{equation}{0}

\newtheorem{theorem}{\bf Theorem}[section]

\newtheorem{corollary}[theorem]{\bf Corollary}
\newtheorem{proposition}[theorem]{\bf Proposition}
\newtheorem{define}[theorem]{\bf Definition}
\newtheorem{remark}{\bf Remark}

\newtheorem{lemma}[theorem]{\bf Lemma}

\newtheorem{assumption}[theorem]{Assumption}

\newcommand{\beq}{\begin{equation}}
\newcommand{\eeq}{\end{equation}}
\newcommand{\ben}{\begin{eqnarray}}
\newcommand{\een}{\end{eqnarray}}
\newcommand{\beno}{\begin{eqnarray*}}
\newcommand{\eeno}{\end{eqnarray*}}

\numberwithin{equation}{section}
\subjclass[2010]{35A01, 35B45, 35R11, 35Q92.}
\keywords{Keller-Segel system, mixing, degenerate dissipation, weakening nonlinear.}
\geometry{left=3cm}
\geometry{top=2.5cm}
\geometry{bottom=2cm}

\makeatother
\title[Keller-Segel system with degenerate dissipation and mixing]{Global well-posedness for a generalized Keller-Segel system with degenerate dissipation and mixing}
\author[B.Shi]{Binbin Shi}
\author[W.Wang]{Weike Wang}
\address{School of Mathematical Sciences , Shanghai Jiao Tong University, Shanghai, 200240,  P.R.China.}
\email{shibbing@163.com,\ binbinshi@sjtu.edu.cn}
\address{School of Mathematical Sciences  and Institute of Natural Science, Shanghai Jiao Tong University, Shanghai, 200240,  P.R.China.}
\email{wkwang@sjtu.edu.cn}

\begin{document}

\begin{abstract}
 We study the mixing effect for a generalized Keller-Segel system with degenerate dissipation and advection by a weakly mixing. Here the attractive operator has weak singularity, namely, the negative derivative appears in the nonlinear term by singular integral. Without advection, the solution of equation blows up in finite time. We show that the global well-posedness of solution with large advection. Since dissipation term degenerate into the damping, the enhanced dissipation effect of mixing no longer occurs, we prove that the mixing effect can weak the influence of nonlinear term. In this case, the mixing effect is similar with inviscid damping of shear flow. Combining to the mixing effect and damping effect of degenerate dissipation, the global $L^\infty$ estimate of solution is established.
\end{abstract}

\maketitle

\section{Introduction}
The generalized parabolic-elliptic Keller-Segel system on torus $\mathbb{T}^d$ with fractional dissipation in the presence of an incompressible flow is as follows
\begin{equation}\label{eq:1.1}
\begin{cases}
\partial_t\rho+Au\cdot \nabla \rho+(-\Delta)^{\frac{\alpha}{2}}\rho+\nabla\cdot(\rho B(\rho))=0,\qquad  & x\in \mathbb{T}^d,\ t>0,\\
\rho(0,x)=\rho_0(x),  & x\in \mathbb{T}^d.
\end{cases}
\end{equation}
Here the quantity $\rho(t,x)$ denotes the density of microorganisms, it is a real-valued function of $t$ and $x$.  $u(x)$ is a divergence-free vector field which is an ambient flow, and $A$ is a positive constant. The domain $\mathbb{T}^d$ is the periodic box with dimension $d\geq2$. The $0\leq\alpha\leq2$ is parameter controlling the strength of the dissipative term. The linear vector operator $B$ is called attractive operator, which could be represented as
\begin{equation}\label{eq:1.2}
B(\rho)=\nabla K\ast \rho, \ \ \ \ x\in \mathbb{T}^d,
\end{equation}
where $K$ is a periodic convolution kernel, which is smooth away from the origin, and near $x=0$
\begin{equation}\label{eq:1.3}
\nabla K\sim -\frac{x}{|x|^\beta}, \ \  \beta\in [2,d+1).
\end{equation}
The similar discussion can be referred to (see \cite{Hopf.2018, Shi.2021, Shi.2019}). Notice that, for the whole space, attractive operator $B$ can be written as
\begin{equation}\label{eq:1.4}
B(\rho)=\nabla((-\Delta)^{-\frac{d+2-\beta}{2}}\rho), \ \ \  x\in \mathbb{R}^d.
\end{equation}
If $\alpha=0$, the fractional dissipation is called degenerate dissipation, and fractional dissipation become damping. And we consider the attractive operator $B$ is weak singularity ($2\leq\beta<d$) in this paper.

\vskip .05in

In the absence of the advection, the Equation (\ref{eq:1.1}) is the generalized  Keller-Segel system with fractional dissipation
\begin{equation}\label{eq:1.5}
\partial_t\rho+(-\Delta)^{\frac{\alpha}{2}}\rho+\nabla\cdot(\rho B(\rho))=0,\quad \rho(0,x)=\rho_0(x),\quad x\in \Omega,
\end{equation}
where $\Omega$ is $\mathbb{R}^d$ or $\mathbb{T}^d$. For $d\geq2$, the solution of Equation (\ref{eq:1.5}) may blow up in finite time for large initial data (see \cite{Biler.2010, Blanchet.2006, Corrias.2004,Hopf.2018, Kiselev.2016, Lafleche.2019, Li.2010,Nagai.1995, Senba.2002}). In particular, the solution of Equation (\ref{eq:1.5}) also may blow up in finite time in the case of $\alpha=0, 2\leq\beta<d$  (see \cite{Lafleche.2019}).

\vskip .05in

Recently, the mixing effect of advection have been widely studied by many scholar. For the advection diffusion equation
\begin{equation}\label{eq:1.6}
\partial_tf+Au\cdot\nabla f-\Delta f=0,\ \ \ div u=0, \ \ \ f(0,x)=f_0(x), \ \ \ x\in \mathbb{T}^d.
\end{equation}
The enhanced dissipation effect of mixing is an interesting phenomenon, it means that the dissipation effect will be enhanced and the $L^2$ norm of solution to the Equation (\ref{eq:1.6}) has a faster decaying if $A$ is large enough. In fact, the enhanced dissipation effect does not occur for all incompressible flows. Next, two types of incompressible flow with dissipation enhancement effect are introduced. First, $u$ is shear flow, we can refer to \cite{Bedrossian.201701,Wei.2021}, and some specific shear flows are considered to the Naver-Stokes equation. For example, Coutte flow (see \cite{Bedrossian.201602,Masmoudi.2020}), Poiseuille flow (see \cite{Michele.2020}) and Kolmogorov flow (see \cite{Lin.2019, Wei.201901,Wei.2020}). Secondly,  Constantin, Kiselev, Ryzhik, and Zlato\v{s} (see \cite{Constantin.2008}) defined the relaxation enhancing flow, and proved that $u$ is relaxation enhancing flow if and only if $u\cdot \nabla$ has no nontrivial $\dot{H}^{1}$ eigenfunction. In Equation (\ref{eq:1.6}), if consider the fractional dissipation, Hopf and Ridrigo (see \cite{Hopf.2018}) defined the $\alpha$-relaxation enhancing flow. Some other studies and examples of relaxation enhancing flow can be also referred to \cite{Constantin.2008,Fayad.2006,Fayad.2002,Kiselev.2016}. In this paper, we consider a special relaxation enhancing flow--weakly mixing (see Definition \ref{def:2.1}), the more details are introduced in Section 2.

\vskip .05in

For the transport equation with mixing effect
\begin{equation}\label{eq:1.7}
\partial_tf+Au\cdot\nabla f=0,\ \ \ div u=0,\ \ \ f(0,x)=f_0(x), \ \ \ x\in \mathbb{T}^d.
\end{equation}
Since there is no dissipation term, the enhanced dissipation effect of $u$ no longer occurs in Equation (\ref{eq:1.7}), and $L^2$ norm of solution to Equation (\ref{eq:1.7}) can not describe the mixing effect of $u$. In fact, the mixing effect of $u$ is studied by $\dot{H}^{-1}$ norm of solution to Equation (\ref{eq:1.7}). This is another interesting phenomenon. For example, if $u$ is shear flow in $\mathbb{T}^2$, and $u\in C^{n_0+2}(\mathbb{T})$ such that $u'(y)=0$ in at most finitely many places and suppose that at each critical point, $u'$ vanishes to at most order $n_0\geq1$, then $k\neq 0$, we have
$$
\|\mathcal{P}_kf(t)\|_{\dot{H}^{-1}_y}\leq C\langle Akt\rangle^{-\frac{1}{1+n_0}} \|\mathcal{P}_kf(0)\|_{\dot{H}^{1}_y},
$$
where
$$
\langle Akt\rangle=\sqrt{1+( Akt)^2},
$$
and $\mathcal{P}_k$ denotes the projection to the $k$-th Fourier mode in $x$, the more details can be seen in \cite{Bedrossian.201701}. If $u$ is weakly mixing, we deduce by RAGE theorem (see Lemma \ref{lem:2.2}), for any $\delta,\tau>0$ and $A$ is large enough, if $f_0$ is a mean zero function, then
$$
\frac{1}{\tau}\int_{0}^\tau\|f(t)\|_{\dot{H}^{-1}}dt\leq \delta \|f_0\|_{L^2},
$$
the more details can be referred to \cite{Constantin.2008,Kiselev.2016}. In recent years, the inviscid damping phenomena of Euler equation around a shear flow attracted attention. For example, Coutte flow (see \cite{Bedrossian.201602,Ionescu.2020,Zeng.2011}), Kolmogorov flow (see \cite{Lin.2019,Wei.2020}) and other shear flows (see \cite{Bedrossian.201501,Jia.2020,Zhang.2018,Zhang.201902,Zillinger.2016,Zillinger.2017}). In fact, the inviscid damping is similar with mixing effect of Equation (\ref{eq:1.7}). If denote $v$ is velocity,  which is the solution of two dimensional incompressible equation. Consider the two dimensional linearized Euler vorticity equation around Coutte flow $(y,0)$
$$
\partial_t\omega+y\partial_x\omega=0,
$$
and we know that
$$
\|v\|_{L^2}\sim \|\omega\|_{\dot{H}^{-1}}.
$$

\vskip .05in

The nonlinear system with mixing effect have also been considered by many authors, and get some interesting phenomenons. For example, enhanced dissipation effect of mixing suppress blowing up of solution to Keller-Segel system have been widely studied by many scholars (see \cite{Bedrossian.2017,Hopf.2018,Kiselev.2016,Shi.2021,Shi.2019,Zi.2021}). The main idea is that the dissipation term become a large damping by enhanced dissipation effect of mixing, so the nonlinear effect can be controlled by large damping effect of dissipation term and obtain the fast decay. Another interesting question is that for the nonlinear system without viscous dissipation, we hope that the mixing effect still have a good influence for well-posedness. Obviously, the enhanced dissipation effect of mixing does not appear in nonlinear system, therefore, we need to look for new mechanism of mixing effect to suppress blowing up.
\vskip .05in

In this paper, we consider the Equation (\ref{eq:1.1}) when the viscous dissipation degenerate into damping ($\alpha=0$) and $2\leq \beta<d$, it is as follows
\begin{equation}\label{eq:1.8}
\begin{cases}
\partial_t\rho+Au\cdot \nabla \rho+\rho+\nabla\cdot(\rho B(\rho))=0,\ \ \ & x\in \mathbb{T}^d,\ t>0,\\
\rho(0,x)=\rho_0(x),\ \ \ & x\in \mathbb{T}^d,
\end{cases}
\end{equation}
where $u$ is weakly mixing. If $A=0$, the solution of Equation (\ref{eq:1.8}) may blow up in finite time (see \cite{Lafleche.2019}). We want to prove that mixing effect can suppress blowing up when $A$ is large enough. Since there is no  viscous dissipation, then the enhanced dissipation effect of mixing does not appear in Equation (\ref{eq:1.8}). We try to study the weakening nonlinear effect of mixing in Equation (\ref{eq:1.8}). Notice that, the attractive operator $B$ has weak singularity, combining to $2\leq \beta<d$ and (\ref{eq:1.2})-(\ref{eq:1.4}), the negative derivative of solution to Equation (\ref{eq:1.8}) appears in the nonlinear term  by singular integral. The later section of this paper will use negative derivative and mixing mechanism to weaken the nonlinear effect.

\vskip .05in

Our goal is to establish the global classical solution
of Equation (\ref{eq:1.8}) through mixing effect, its means that the blow up phenomenon  of solution of Equation (\ref{eq:1.5}) can be suppressed  in the case of $\alpha=0, 2\leq\beta<d$. For the Equation (\ref{eq:1.1}), in the previous work, the main idea is to use the enhanced dissipation effect of the mixing to improved the influence of the dissipative term, such that the blow up phenomena is suppressed. Here we consider mixing effect can weaken the influence of nonlinear term. In the case of $2\leq\beta<d$, the negative derivative of solution to Equation (\ref{eq:1.8}) appears in the nonlinear term. Then there is a negative-index Sobolev norm of solution to Equation (\ref{eq:1.8}) in the estimate of nonlinear term. Inspired by inviscid damping and the discussion of mixing effect of $u$ in Equation (\ref{eq:1.7}), we know that the negative-index Sobolev norm of solution to Equation (\ref{eq:1.8}) is small if $A$ is large enough. Thus, the strong nonlinearity of Equation (\ref{eq:1.8}) becomes weak nonlinearity. Since degenerate dissipation is damping, and we know that the damping can destroy weak nonlinear structure (see \cite{Chen.2016,Wang.2001}), then nonlinear term is controlled. Based on the above discussion , it is reasonable to consider global solution of Equation (\ref{eq:1.8}) in the case of $\alpha=0, 2\leq\beta<d$.

\vskip .05in

Now, let us state main result.

\begin{theorem}\label{thm:1.1}
Let $2\leq \beta<d, d>2$, for any initial data $\rho_0\geq0, \rho_0\in W^{3,\infty}(\mathbb{T}^d)$, there exists a positive constant $A_0=A(\beta,\rho_0, d)$, if $A\geq A_0$, then the unique classical solution $\rho(t,x)$ of Equation (\ref{eq:1.8}) is global in time, and one has
$$
\rho(t,x)\in C(\mathbb{R}^{+}; W^{3,\infty}(\mathbb{T}^d)).
$$
\end{theorem}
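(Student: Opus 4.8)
The plan is to establish global well-posedness through a continuation argument: local existence in $W^{3,\infty}$ is standard, so the crux is an a priori bound preventing blow-up. The strategy rests on the observation laid out in the introduction — that the degenerate dissipation furnishes a damping term $\rho$, while the mixing keeps the solution's negative-index Sobolev norm small, so that the nonlinearity (which carries a negative derivative because $2\le\beta<d$) can be absorbed by the damping. Concretely, I would first record the structure of the nonlinear term: since $B(\rho)=\nabla K*\rho$ with $\nabla K\sim -x/|x|^\beta$ and $2\le\beta<d$, the operator $B$ is a singular integral of order $\beta-d-1<0$, so $B(\rho)$ gains roughly $d+1-\beta$ derivatives. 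Thus $\nabla\cdot(\rho B(\rho))$ involves $\rho$ multiplied against a smoothed-and-differentiated copy of $\rho$, and the dangerous factor can be estimated by a \emph{negative}-index norm $\|\rho\|_{\dot H^{-s}}$ for a suitable $s>0$ together with a high norm of the other factor.

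Second, I would track the evolution of $L^\infty$ (and more generally $W^{3,\infty}$) norms along characteristics of the full transport-damping-nonlinearity flow. Writing $\frac{d}{dt}\|\rho\|_{L^\infty}$, the transport term $Au\cdot\nabla\rho$ drops out (it only moves mass, being divergence-free), the damping contributes $-\|\rho\|_{L^\infty}$, and the nonlinear term contributes a product estimate. The key quantitative input is the mixing bound: by the RAGE-type estimate (Lemma \ref{lem:2.2}), for $A$ large the time-averaged $\dot H^{-1}$ norm of the mean-zero part of $\rho$ is forced to be small, exactly the phenomenon the authors call ``weakening nonlinear.'' I would use this to show that over suitable time windows the negative-Sobolev factor in the nonlinear estimate is a small multiple of the high norm, so the nonlinear growth rate is dominated by the damping rate.

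Third, I would close the estimate with a bootstrap/continuity argument. Assume on a maximal interval that $\|\rho(t)\|_{W^{3,\infty}}\le M$ for a constant $M$ to be fixed in terms of $\|\rho_0\|_{W^{3,\infty}}$. Under this ansatz, the mixing smallness plus the damping yields a differential inequality of the schematic form $\frac{d}{dt}\,\|\rho\|_{W^{3,\infty}}\le -\|\rho\|_{W^{3,\infty}}+\varepsilon(A)\,\|\rho\|_{W^{3,\infty}}^2+\text{(lower order)}$, where $\varepsilon(A)\to 0$ as $A\to\infty$. Choosing $A\ge A_0$ so that $\varepsilon(A)M\le \tfrac12$ makes the right-hand side strictly dissipative at the threshold, so the bound $M$ is never saturated and the solution continues globally. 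Positivity $\rho\ge0$ is preserved by the transport-damping structure and is used to keep the nonlinear sign-analysis clean.

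The main obstacle, I expect, is the interface between the time-\emph{averaged} mixing bound and the \emph{pointwise-in-time} control needed for an $L^\infty$/$W^{3,\infty}$ estimate: the RAGE theorem only controls $\frac1\tau\int_0^\tau\|\rho\|_{\dot H^{-1}}\,dt$, not $\|\rho(t)\|_{\dot H^{-1}}$ at each instant, so I would need to combine the average smallness with the damping over each window $[t,t+\tau]$ — essentially a Grönwall argument on windows where the accumulated nonlinear contribution is bounded by the integrated negative norm, while the damping beats exponential growth between windows. A secondary technical difficulty is making the singular-integral product estimate rigorous on the torus (handling the periodized kernel and the nonzero mean / low-frequency part of $\rho$ separately), and verifying that the constant $\varepsilon(A)$ genuinely interpolates a negative-index norm against the $W^{3,\infty}$ bound with the right powers so that the quadratic term is tamable by the linear damping.
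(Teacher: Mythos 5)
Your high-level strategy matches the paper's: damping from the degenerate dissipation, smallness of a negative-index Sobolev quantity forced by mixing, and a bootstrap/continuity argument to globalize. However, there are two genuine gaps. First, you invoke the RAGE theorem to conclude that the time-averaged negative norm of $\rho$ itself is small, but Lemma \ref{lem:2.2} only controls $P_N U^{At}\phi$ for the \emph{linear} transport group $U^t$; it says nothing directly about the solution of the nonlinear equation. The paper bridges this with an approximation step (Lemma \ref{lem:3.4}): over a short window $\tau=T_c/A$ the nonlinear solution is compared, via Duhamel and the semigroup bound of Lemma \ref{lem:2.3}, to the solution of the free transport--damping equation, and only then is RAGE applied. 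Moreover, what is actually shown to be small is not a time average of $\|\rho\|_{\dot H^{-1}}$ but the measure of the set of times on which the frequency ratio $\Phi(A)=\|\Lambda^{\beta-d}(\rho-\overline{\rho})\|_{L^2}^2/\|\rho-\overline{\rho}\|_{L^2}^2$ fails to be small (Lemma \ref{lem:3.5}, proved by contradiction); the Gr\"onwall step in Proposition \ref{prop:3.6} then splits $[0,T^\ast]$ into the small bad set and its complement. Your ``windows'' idea gestures at this but does not supply the linear--nonlinear comparison, which is the technically essential ingredient.

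Second, your proposed closing inequality $\frac{d}{dt}\|\rho\|_{W^{3,\infty}}\le -\|\rho\|_{W^{3,\infty}}+\varepsilon(A)\|\rho\|_{W^{3,\infty}}^2$ cannot hold as stated: while $Au\cdot\nabla$ drops out at a maximum of $\rho$, differentiating the equation produces the commutator term $A\,Du\cdot\nabla\rho$, which contributes $+CA\|D\rho\|_{L^\infty}^2$ and overwhelms the damping precisely in the large-$A$ regime you need. The paper avoids this by closing the bootstrap only at the $L^\infty$ and $L^2$ levels (Assumption \ref{assm:3.3}, Proposition \ref{prop:3.6}, Corollary \ref{cor:3.7}), and then letting the $W^{3,\infty}$ norm grow like $e^{CAt}$ on bounded time intervals (Proposition \ref{prop:4.1}), which is all the continuation argument requires. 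You would need to restructure your bootstrap accordingly, since a uniform-in-time dissipative bound at the $W^{3,\infty}$ level is not what the argument can deliver.
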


\begin{remark}\label{rem:2}
In fact, we obtained the global $L^p (1\leq p\leq\infty)$ estimate of solution to Equation (\ref{eq:1.8}) through mixing effect and damping effect of degenerate dissipation influence the nonlinear term. If we consider the Equation (\ref{eq:1.8}) without nonlinear term, it is as follows
\begin{equation}\label{eq:1.11}
\partial_t\eta+Au\cdot \nabla \eta+\eta=0.
\end{equation}
We know that the mixing effect has no any influence for establishing the $L^p$ estimate. If we consider the Equation (\ref{eq:1.8}) without full degenerate dissipation
\begin{equation}\label{eq:1.12}
\partial_t\rho+Au\cdot \nabla \rho+\nabla\cdot(\rho B(\rho))=0,
\end{equation}
the global solution of Equation (\ref{eq:1.12}) can not be obtained by mixing effect.
\end{remark}

The RAGE theorem (see \cite{Cycon.1987}) plays an important role in the analysis of mixing effect of weakly mixing, the details can be seen in \cite{Constantin.2008}, and is applied to consider the enhanced dissipation effect of mixing for the generalized Keller-Segel system  with relaxation enhancing flow (see \cite{Hopf.2018,Kiselev.2016,Shi.2019}). Recently, Wei \cite{Wei.2021} showed the enhanced dissipation of relaxation enhancing flow by the resolvent estimate of operator $-\Delta+Au\cdot \nabla$. However, the method of resolvent estimate is not suitable for the full degenerate dissipation case. In \cite{Hopf.2018,Kiselev.2016,Shi.2019}, based on the enhanced dissipation effect of mixing to the generalized Keller-Segel system  with weakly mixing, the global prior estimate was established  by isometric local extension if $A$ is large enough. We know that the constant $A$ is fixed by applying the the RAGE theorem, then we only to obtain the uniform time $T_c$. In \cite{Constantin.2008,Kiselev.2016,Shi.2019}, $T_c=T(N, \sigma, \mathcal{K}, U^t)$, where $N, \sigma$ and $U^t $ are fixed, $\mathcal{K}$ is a compact functional set. If $\alpha=0$, we can not find a compact functional set.

In this paper, we find that the RAGE theorem (see Lemma \ref{lem:2.3}) is valid for the case of uniformly bounded functional set of $L^2$ norm in the case of $\alpha=0$,  the details can see Remark \ref{rem:3}. Thus we can analysis of mixing effect in Equation (\ref{eq:1.8}) by the RAGE theorem (see Lemma \ref{lem:2.2}). Compared with enhanced dissipation, the weakening nonlinear effect of mixing is a new discovery of this paper. And we need some new technical to analysis the weakening nonlinear effect of mixing in Equation (\ref{eq:1.8}). We define the $\Phi(A)$(see (\ref{eq:3.19})) to describe the mixing effect, and establish the estimate of $\Phi(A)$, the main ideas is based on the RAGE theorem, energy estimate and contradiction, the details can be seen in Lemma \ref{lem:3.5}.

\vskip .05in

In the following, we briefly state our main ideas of the proof. Firstly, we establish the global $L^\infty$ estimate of solution to Equation (\ref{eq:1.8}) by global extension, the main idea is from abstract bootstrap principle (see \cite{Tao.2006}). The local $L^2$ and $ L^\infty$ estimate of the solution are obtained by energy method. If $A$ is large enough, then there exist a maximum time $T^\ast$, such that for any $t\in [0,T^\ast]$, the same local $L^2$ and $ L^\infty$ estimate of the solution are still true. it means that the time set of satisfies the local $L^2$ and $ L^\infty$ estimate is nonempty and close. And based on  the mixing effect when $A$ is large enough and damping effect of degenerate dissipation, at the time $T^\ast$, we show that the local $L^2$ and $ L^\infty$ estimate of solution to Equation (\ref{eq:1.8}) are controlled by initial data, so the time set of the local $L^2$ and $ L^\infty$ estimate can be extended. Then the time set is also a open set. Combining to the time set is nonempty, close and open, then $T^\ast=\infty$ by bootstrap argument. Next, we obtain the $W^{3,\infty}$ estimate of solution to Equation (\ref{eq:1.8}) by standard energy method. Based on the continuation argument, the global classical solution is obvious.

\vskip .05in

This paper is organized as follows. In Section 2, we introduce functional space and mixing effect. Some notations and useful properties are also introduced in this section. In Section 3, we obtain the global $L^\infty$ estimate of solution to Equation (\ref{eq:1.8}) by bootstrap argument. In Section 4, we establish the $W^{3,\infty}$ estimate of solution to Equation (\ref{eq:1.8}). In Appendix, we give the proof of Lemma \ref{lem:2.3} by some semigroup estimate method.

\vskip .05in

Throughout the paper, $C$ stands for universal constant that may change from line to line.

\vskip .2in

\section{Preliminaries}
In what follows, we provide some the auxiliary results and notations.

\subsection{Functional spaces and notations} We write $L^p(\mathbb{T}^d)$ for the usual  Lebesgue space, the norm for the $L^p$ space is denoted as $\|\cdot\|_{L^p}$, it means
$$
\|f\|_{L^p}=\left( \int_{\mathbb{T}^d}|f|^pdx\right)^{\frac{1}{p}},
$$
with natural adjustment when $p=\infty$. The homogeneous Sobolev norm  $\|\cdot\|_{\dot{W}^{s,p}}$ is defined by
$$
\|f\|_{\dot{W}^{s,p}}=\|D^{s}f\|_{L^p},
$$
where $s\geq 0, p\geq 1$ and $ D$ is any partial  derivative. The non-homogeneous Sobolev norm  $\|\cdot\|_{W^{s,p}}$ is
$$
\|f\|_{W^{s,p}}=\|f\|_{\dot{W}^{s,p}}+\|f\|_{L^p}.
$$
In particular, if $p=2$, the homogeneous Sobolev norm  $\|\cdot\|_{\dot{W}^{s,2}}=\|\cdot\|_{\dot{H}^{s}}$, and
$$
\|f\|^2_{\dot{H}^{s}(\mathbb{T}^d)}=\|(-\Delta)^{\frac{s}{2}}f\|^2_{L^2(\mathbb{T}^d)}=\sum_{k\in \mathbb{Z}^d\backslash \{0\}}|k|^{2s}|\hat{f}(k)|^2.
$$
The non-homogeneous Sobolev norm  $\|\cdot\|_{W^{s,2}}=\|\cdot\|_{H^{s}}$, and
$$
\|f\|_{H^{s}}=\|f\|_{L^2}+\|f\|_{\dot{H}^{s}}.
$$
The negative-index Sobolev space norm $\|\cdot\|_{H^{-s}}$ is defined as
$$
\|f\|^2_{\dot{H}^{-s}(\mathbb{T}^d)}=\sum_{k\in \mathbb{Z}^d\backslash \{0\}}|k|^{-2s}|\hat{f}(k)|^2,\ \ \ s>0.
$$

In this paper,  some basic energy inequalities are used in the proof, we can refer to \cite{Evans.2010,Friedman.1969} for more details.

\subsection{Mixing  effect}
Given an incompressible vector field $u=u(x)$, which is Lipschitz in spatial variables. If we defined the trajectories map by (see \cite{Constantin.2008, Kiselev.2016,Shi.2019})
$$
\frac{d}{dt} \Phi_t(x)=u(\Phi_t(x)), \quad \Phi_0(x)=x.
$$
Then define a unitary operator $U=U^t$ acting on $L^2(\mathbb{T}^d)$ as follows
$$
U^t f(x)=f(\Phi_t^{-1}(x)), \ \  f(x)\in L^2(\mathbb{T}^d).
$$
We take
\begin{equation}\label{eq:2.1}
X=\left\{ f\in L^2(\mathbb{T}^d)\big| \int_{\mathbb{T}^d}f(x)dx=0\right\},
\end{equation}
and for a fixed $t>0$, denote
$$
U=U^t.
$$

First, we introduce the definition of  weakly mixing (see \cite{Constantin.2008,Kiselev.2016}).

\begin{define}\label{def:2.1}
The incompressible flow $u$ is called weakly mixing, if $u=u(x)$ is smooth and the spectrum of the operator $U$ is purely continuous on $X$.
\end{define}

We denote by $0\leq\lambda_1\leq\lambda_2\leq\cdots\leq \lambda_n\leq\cdots$ the eigenvalues of $-\Delta$, without loss generality, suppose that
\begin{equation}\label{eq:2.2}
\lambda_n=n^2, \ \ \ n=1,2\cdots.
\end{equation}
Let us also denote by $P_N$ the orthogonal projection operator on the subspace formed by Fourier modes $|k|\leq N$:
\begin{equation}\label{eq:2.3}
P_Nf(x)=\sum_{|k|\leq N}\hat{f}(k)e^{ikx}.
\end{equation}
If we defined
\begin{equation}\label{eq:2.4}
S=\{\phi \in X\big| \|\phi\|_{L^2}=1\}.
\end{equation}

The following lemma is an extension of the well-know RAGE theorem (see \cite{Constantin.2008,Cycon.1987, Kiselev.2016}).

\begin{lemma}\label{lem:2.2}
Let $U^t$ be a unitary operator with purely continuous spectrum defined on $L^2(\mathbb{T}^d)$. Let for any $\phi\in S$, Then for every $N$ and $\sigma>0$, there exists $T_c=T(N, \sigma, \phi, U^t)$ such that for all $T\geq T_c$, one has
$$
\frac{1}{T}\int_0^T \|P_N U^t \phi\|_{L^2}^2dt\leq\sigma,
$$
where $P_N$ is defined in (\ref{eq:2.3}) and $S$ is defined in (\ref{eq:2.4}).
\end{lemma}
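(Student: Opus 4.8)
The plan is to recognize the statement as the continuous-time RAGE theorem for the finite-rank (hence compact) projection $P_N$, and to establish it by spectral calculus. First I would invoke Stone's theorem: the family $\{U^t\}_{t\in\mathbb{R}}$ is a strongly continuous one-parameter unitary group on $X$, so there is a self-adjoint generator $H$ with $U^t=e^{itH}$, and I write $E(\cdot)$ for its projection-valued spectral measure. The weakly-mixing hypothesis that $U$ has purely continuous spectrum on $X$ forces $H$ to have no eigenvalues (an eigenvalue $\lambda$ of $H$ would produce the eigenvalue $e^{it\lambda}$ of $U^t$), so $E$ is non-atomic. Since the incompressible flow is measure-preserving, $U^t$ maps $X$ into itself, hence $U^t\phi\in X$ for $\phi\in S$ and the zero mode never contributes; writing $\{e_k\}_{0<|k|\le N}$ for the $L^2$-normalized exponentials spanning $\mathrm{Ran}(P_N)\cap X$, I get the finite sum
$$
\|P_N U^t\phi\|_{L^2}^2=\sum_{0<|k|\le N}|\langle U^t\phi,e_k\rangle|^2.
$$

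Next I would treat each term through the complex spectral measure $\mu_k:=\langle E(\cdot)\phi,e_k\rangle$, a finite measure on $\mathbb{R}$ that is non-atomic because $E$ is. The spectral theorem identifies $\langle U^t\phi,e_k\rangle=\int_{\mathbb{R}}e^{it\lambda}\,d\mu_k(\lambda)$ with the Fourier transform of $\mu_k$, and I would then apply Wiener's theorem: for a finite complex measure the Ces\`aro time-average of the squared Fourier transform recovers the sum of its squared atoms,
$$
\lim_{T\to\infty}\frac{1}{T}\int_0^T\Big|\int_{\mathbb{R}}e^{it\lambda}\,d\mu_k(\lambda)\Big|^2\,dt=\sum_{\lambda\in\mathbb{R}}|\mu_k(\{\lambda\})|^2=0.
$$
Summing over the finitely many $k$ with $0<|k|\le N$ gives $\frac{1}{T}\int_0^T\|P_N U^t\phi\|_{L^2}^2\,dt\to 0$ as $T\to\infty$. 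The quantitative conclusion is then immediate: given $\sigma>0$, one takes $T_c$ large enough that this convergent average stays below $\sigma$ for all $T\ge T_c$, which accounts for the stated dependence $T_c=T(N,\sigma,\phi,U^t)$.

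The heart of the matter, and the step I expect to require the most care, is Wiener's identity itself. Expanding the square and interchanging the time integral with the double spectral integral by Fubini reduces everything to the kernel $\frac{1}{T}\int_0^T e^{it(\lambda-\lambda')}\,dt$, which is bounded by $1$ and converges pointwise to the indicator of the diagonal $\{\lambda=\lambda'\}$ as $T\to\infty$. Dominated convergence (the dominating measure being the finite product of total variations $|\mu_k|\times|\mu_k|$) then passes the limit inside the double integral, leaving only the product measure of the diagonal; non-atomicity of $\mu_k$ yields $(|\mu_k|\times|\mu_k|)(\{\lambda=\lambda'\})=\int_{\mathbb{R}}\mu_k(\{\lambda'\})\,d\mu_k(\lambda')=0$. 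This is precisely where the purely-continuous-spectrum hypothesis is indispensable: a point eigenvalue of $H$ would contribute a positive atom and the time-average would fail to vanish. The remaining items---strong continuity of $t\mapsto U^t\phi$ to justify measurability and Fubini, and the finiteness of the index set $\{k:0<|k|\le N\}$---are routine, so that the only genuine input beyond standard functional analysis is the spectral assumption encoding weak mixing.
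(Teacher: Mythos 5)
Your proof is correct and is exactly the standard argument for the continuous-time RAGE theorem: reduce the finite-rank projection $P_N$ to finitely many spectral measures $\mu_k=\langle E(\cdot)\phi,e_k\rangle$ and apply Wiener's theorem, using non-atomicity of $E$ (equivalently, purely continuous spectrum of $U$) to kill the diagonal contribution. The paper itself does not prove Lemma \ref{lem:2.2} but defers to \cite{Constantin.2008,Cycon.1987,Kiselev.2016}, and the proof in those references is the same Wiener/spectral-measure argument you give, so there is nothing to flag beyond the minor notational point that the diagonal mass should be written with the total-variation measures $|\mu_k|\times|\mu_k|$ throughout.
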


Next, we give a useful lemma for the estimation of operator $e^{-tAu\cdot \nabla}$, it is as follows
\begin{lemma}\label{lem:2.3}
Let $u$ is weakly mixing. The $e^{-tAu\cdot \nabla}$ is a semigroup operator with operator $-Au\cdot \nabla$. There exist a $A_1>0$, if $A>A_1$, then for any $t\geq0$ and $f\in \mathcal{S}(\mathbb{T}^d)$, we have
$$
\|P_Ne^{-tAu\cdot \nabla}f\|_{L^2}\leq Ce^{N^2 t}\|P_N f\|_{L^2},
$$
where $P_N$ is defined in (\ref{eq:2.3}) and $C>1$ is a fixed constant.
\end{lemma}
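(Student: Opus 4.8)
The plan is to exploit the one structural fact that makes the generator tractable: since $u$ is smooth and divergence-free, $u\cdot\nabla$ is skew-adjoint on $L^2(\mathbb{T}^d)$, so $-Au\cdot\nabla$ generates a \emph{unitary} group, and $w(t):=e^{-tAu\cdot\nabla}f$ satisfies $\|w(t)\|_{L^2}=\|f\|_{L^2}$ for every $t$ together with $\int_{\mathbb{T}^d}w(t)\,dx=\int_{\mathbb{T}^d}f\,dx$. Rather than bounding the full propagator I would track only the low-frequency part $P_Nw$ by a direct energy identity. Writing $w=P_Nw+(I-P_N)w$ and differentiating,
\[
\frac{d}{dt}\|P_Nw\|_{L^2}^2=-2A\,\mathrm{Re}\big\langle P_Nw,\;P_N(u\cdot\nabla w)\big\rangle,
\]
the decisive simplification is that the low–low interaction drops out: because $P_N$ is an orthogonal projection and $u\cdot\nabla$ is skew-adjoint, the operator $P_N(u\cdot\nabla)P_N$ is skew-adjoint on the finite-dimensional space $V_N=\mathrm{span}\{e^{ikx}:|k|\le N\}$, so that $\big\langle P_Nw,\,P_N(u\cdot\nabla P_Nw)\big\rangle=0$ (equivalently, integrate by parts and use $\nabla\cdot u=0$). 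Only the coupling of the low modes to the high modes survives.

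Next I would estimate the surviving term $-2A\,\mathrm{Re}\big\langle P_Nw,\,P_N(u\cdot\nabla(I-P_N)w)\big\rangle$. Moving the derivative onto the low-frequency factor (integration by parts, again using $\nabla\cdot u=0$), Bernstein's inequality $\|\nabla P_Nw\|_{L^2}\le N\|P_Nw\|_{L^2}$ together with Cauchy–Schwarz produces a differential inequality of the schematic form $\tfrac{d}{dt}\|P_Nw\|_{L^2}^2\le 2AN\,\|P_Nw\|_{L^2}\,\|(I-P_N)w\|_{L^2}$, where the frequency weight matches the normalization $\lambda_N=N^2$ fixed in \eqref{eq:2.2}. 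The target is to convert the prefactor on the right-hand side into a multiple of the low-mode energy itself, so that Gronwall's lemma yields growth no faster than $e^{N^2t}$ and hence the claimed bound $\|P_Ne^{-tAu\cdot\nabla}f\|_{L^2}\le Ce^{N^2t}\|P_Nf\|_{L^2}$ with a fixed $C>1$ (the value $C>1$ absorbing the $t=0$ normalization). The semigroup property and the density of $\mathcal{S}(\mathbb{T}^d)$ then extend the estimate to all $t\ge 0$ and all admissible $f$.

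The hard part is precisely this conversion, and it is where the hypotheses $A>A_1$ and weak mixing must enter. The naive estimate of the coupling term carries an explicit factor of $A$, which would destroy any constant $C$ that is supposed to be independent of $A$; Cauchy–Schwarz alone cannot close the inequality. The mechanism I would use to absorb the factor $A$ is the RAGE-type control of Lemma \ref{lem:2.2}: for $A>A_1$ the unitary evolution $e^{-tAu\cdot\nabla}$ keeps the low-frequency mass $\|P_Nw\|_{L^2}$ small in a time-averaged sense, so that the time integral of the dangerous coupling is controlled uniformly in $A$ once $A$ exceeds the threshold $A_1$ furnished by the weak-mixing spectral hypothesis (Definition \ref{def:2.1}). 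Making this uniform-in-$A$ absorption quantitative — rather than the skew-adjoint reduction or the Bernstein/Gronwall bookkeeping, both of which are routine — is the crux of the argument, and is exactly the point where the semigroup estimates announced for the Appendix are needed.
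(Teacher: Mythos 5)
Your proposal does not close the argument: the step you yourself identify as ``the crux'' --- converting the coupling term $-2A\,\mathrm{Re}\langle P_Nw, P_N(u\cdot\nabla (I-P_N)w)\rangle$ into something that Gronwall can handle with a constant independent of $A$ --- is exactly the content of the lemma, and you leave it unproved, deferring to ``the semigroup estimates announced for the Appendix.'' The gap is not cosmetic. Your energy identity yields at best $\frac{d}{dt}\|P_Nw\|_{L^2}^2\le 2A\|u\|_{L^\infty}N\|P_Nw\|_{L^2}\|(I-P_N)w\|_{L^2}$, and since $e^{-tAu\cdot\nabla}$ is unitary the factor $\|(I-P_N)w\|_{L^2}$ is only controlled by $\|f\|_{L^2}$, not by $\|P_Nf\|_{L^2}$; so even granting some absorption of the factor $A$, Gronwall would produce a bound in terms of $\|f\|_{L^2}$ rather than the claimed $\|P_Nf\|_{L^2}$. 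The mechanism you invoke to kill the factor $A$ --- the RAGE theorem (Lemma \ref{lem:2.2}) --- gives only time-averaged smallness of $\|P_NU^{At}\phi\|_{L^2}$ for a fixed mean-zero $\phi$, which does not plug into a pointwise-in-time differential inequality for $\|P_Nw(t)\|_{L^2}$; there is no evident way to upgrade an averaged bound on the low modes into the uniform-in-$A$ absorption your Gronwall step requires.

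The paper's route is different and does not pass through an energy identity for $P_Nw$ at all. It compares the transport semigroup with the advection--diffusion semigroup $e^{-tH_A}$, $H_A=-\Delta+Au\cdot\nabla$: by the Gearhart--Pr\"uss type theorem (Lemma \ref{lem:A.2}) one has $\|e^{-tH_A}\|_{L^2\to L^2}\le e^{-t\Psi(H_A)+\pi/2}$, and by Wei's resolvent estimate for weakly mixing flows (Lemma \ref{lem:A.4}) $\Psi(H_A)\to+\infty$ as $A\to\infty$, so for $A>A_1$ the semigroup $e^{-tH_A}$ is uniformly bounded in $t$. The factor $e^{N^2t}$ in the statement then arises by undoing the heat damping $e^{-\lambda_k t}$ on the finitely many modes $|k|\le N$ (with the normalization $\lambda_N=N^2$ from (\ref{eq:2.2})), not from a Bernstein/Gronwall bookkeeping on the transport equation. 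So the weak-mixing hypothesis enters through the spectral quantity $\Psi(H_A)$ of the dissipative operator, not through the RAGE theorem. If you want to salvage your approach you would need a genuinely new argument at the absorption step; as written, the proposal reduces the lemma to an unproved claim of the same difficulty.
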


\begin{remark}\label{rem:3}
In this paper, the RAGE theorem is different from that in reference \cite{Constantin.2008, Shi.2019,Kiselev.2016},  because we can not find a compact set to fix the time $T_c=T(N, \sigma, \mathcal{K}, U^t)$. In fact, the time $T_c=T(N, \sigma, \phi, U^t)$(see Lemma \ref{lem:2.2}) is dependent on $\|\phi\|_{L^2}$. In the proof of RAGE theorem, under the assumption of $\|\phi\|_{L^2}=1$, the estimate of Lemma \ref{lem:2.2} become the estimate of operator norm, which is independent of $\phi$,  the details can refer to \cite{Cycon.1987}.
\end{remark}

\begin{remark}
The proof of Lemma \ref{lem:2.3}  needs to supplement some semigroups and resolvent estimates, where we omitte the proof. The details can referred to Appendix.
\end{remark}

\vskip .2in

\section{Global $L^\infty$ estimate}
In this section, we establish the global $L^\infty$ estimate of solution to Equation (\ref{eq:1.8}) by bootstrap argument. For initial data $\rho_0(x)$, we suppose
\begin{equation}\label{eq:3.1}
\rho_0(x)\geq 0,\ \ \ \rho_0(x)\in W^{3,\infty}(\mathbb{T}^d),
\end{equation}
and
\begin{equation}\label{eq:3.2}
\|\rho_0\|_{L^\infty}\leq C_{\infty},\ \ \ \ \|\rho_0\|_{L^2}\leq B_0.
\end{equation}
Then one has
$$
\rho(t,x)\geq 0,
$$
and compare with Equation (\ref{eq:1.1}) in the case of $\alpha>0$, the $L^1$ norm of solution to Equation (\ref{eq:1.8}) is not conservation. Define
\begin{equation}\label{eq:1.9}
\overline{\rho}(t)=\frac{1}{|\mathbb{T}^d|}\int_{\mathbb{T}^d}\rho(t,x)dx,\ \ \ \ \ \ \overline{\rho}_0=\frac{1}{|\mathbb{T}^d|}\int_{\mathbb{T}^d}\rho_0(x)dx,
\end{equation}
then we deduce by Equation (\ref{eq:1.8}) that
\begin{equation}\label{eq:1.10}
\frac{d}{dt}\overline{\rho}+\overline{\rho}=0,\ \ \ \ \ \ \overline{\rho}(t)=e^{-t}\overline{\rho}_0.
\end{equation}

\subsection{Local estimate}
First, we establish the local estimate of solution to Equation (\ref{eq:1.8}).

\begin{lemma}\label{lem:3.1}
Let $2\leq\beta<d, d>2$, $\rho(t,x)$ is the solution of Equation (\ref{eq:1.8}) with initial data $\rho_0(x)$. Suppose that the $\rho_0(x)$ satisfies (\ref{eq:3.1}) and (\ref{eq:3.2}). Then there exist a time $\tau_0>0$, such that for any $0\leq t \leq \tau_0$, one has
$$
\|\rho(t,\cdot)\|_{L^\infty}\leq 2C_{\infty}.
$$
\end{lemma}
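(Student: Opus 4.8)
The plan is to derive a Riccati-type differential inequality for $\|\rho(t)\|_{L^\infty}$ and integrate it over a short time interval. The advection by $Au$ will play no role in this local bound, consistent with Remark \ref{rem:2}; the estimate is driven entirely by the interplay of the nonlinear term with the damping.

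First I would rewrite the nonlinear term in advective form,
\beq
\nabla\cdot(\rho B(\rho)) = B(\rho)\cdot\nabla\rho + \rho\,\nabla\cdot B(\rho),
\eeq
so that \eqref{eq:1.8} becomes a transport equation with source $-\rho - \rho\,\nabla\cdot B(\rho)$. The key observation is that $\nabla\cdot B(\rho) = \Delta K \ast \rho$, and from \eqref{eq:1.3} one computes $\Delta K \sim -(d-\beta)|x|^{-\beta}$ near the origin. Since $2\le\beta<d$, the kernel $\Delta K$ is integrable near $x=0$ (because $\int_{|x|<1}|x|^{-\beta}\,dx<\infty$ precisely when $\beta<d$) and smooth away from it, so $\Delta K\in L^1(\mathbb{T}^d)$. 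Young's inequality then yields the crucial bound
\beq
\|\nabla\cdot B(\rho)\|_{L^\infty} \le \|\Delta K\|_{L^1}\,\|\rho\|_{L^\infty} \le C\|\rho\|_{L^\infty}.
\eeq
This is the only place where the weak-singularity hypothesis $\beta<d$ enters, and it is what turns the nonlinearity into something quadratic in $\|\rho\|_{L^\infty}$.

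Next I would run a standard $L^p$ energy estimate for finite $p$. Multiplying \eqref{eq:1.8} by $p\rho^{p-1}$ (legitimate since $\rho\ge0$) and integrating, the transport terms become divergence terms: $\int u\cdot\nabla(\rho^p)=0$ by incompressibility of $u$, and $\int B(\rho)\cdot\nabla(\rho^p)=-\int\rho^p\,\nabla\cdot B(\rho)$. Collecting terms gives
\beq
\frac{d}{dt}\|\rho\|_{L^p}^p = -p\|\rho\|_{L^p}^p + (1-p)\int_{\mathbb{T}^d}\rho^p\,\nabla\cdot B(\rho)\,dx,
\eeq
and combining with the bound above,
\beq
\frac{d}{dt}\|\rho\|_{L^p} \le -\|\rho\|_{L^p} + \frac{p-1}{p}\,C\|\rho\|_{L^\infty}\|\rho\|_{L^p}.
\eeq
Letting $p\to\infty$, so that $\|\rho\|_{L^p}\to\|\rho\|_{L^\infty}$, and discarding the favorable damping $-\|\rho\|_{L^\infty}$, I obtain the Riccati inequality
\beq
\frac{d}{dt}\|\rho(t)\|_{L^\infty} \le C\|\rho(t)\|_{L^\infty}^2.
\eeq
Integrating with $\|\rho_0\|_{L^\infty}\le C_\infty$ gives $\|\rho(t)\|_{L^\infty}\le C_\infty/(1-CC_\infty t)$, so the choice $\tau_0 = 1/(2CC_\infty)$ forces $\|\rho(t)\|_{L^\infty}\le 2C_\infty$ for all $0\le t\le\tau_0$, which is the claim.

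The main obstacle is the rigorous justification of the limit $p\to\infty$ and of the differentiation of the supremum norm. I would handle this by keeping $p$ finite throughout, deriving the uniform-in-$p$ bound $\|\rho(t)\|_{L^p}\le C_\infty/(1-CC_\infty t)$ via Gr\"onwall, and only then passing to the limit; alternatively one can argue along the characteristics of the field $Au+B(\rho)$, evaluating the source $-\rho-\rho\,\nabla\cdot B(\rho)$ at a point where the maximum is attained. A secondary technical point is that all these manipulations presuppose enough regularity to integrate by parts, which is furnished by the local classical solution assumed in the statement.
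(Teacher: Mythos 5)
Your proposal is correct and rests on exactly the same mechanism as the paper's proof: since $2\le\beta<d$ forces $\Delta K\in L^1(\mathbb{T}^d)$, Young's inequality turns the nonlinear term into a quadratic bound $C_0\|\rho\|_{L^\infty}^2$, and the resulting Riccati inequality $\frac{d}{dt}\|\rho\|_{L^\infty}\le C_0\|\rho\|_{L^\infty}^2$ is integrated on $[0,\tau_0]$ with $\tau_0\sim(2C_0C_\infty)^{-1}$. The paper implements this by evaluating the equation at a point of maximum (your stated alternative) rather than by the $p\to\infty$ limit of $L^p$ estimates, but this is only a technical variation; the one detail you omit is that $\tau_0$ should also be capped by the local existence time $T_0$.
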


\begin{proof}
Denote
$$
\widetilde{\rho}(t)=\rho(t,\overline{x}_t)=\max_{x\in \mathbb{T}^d}\rho(t,x).
$$
For any fixed $t\geq0$, using the vanishing of a derivation at the point of maximum, we see that
$$
\partial_t \rho(t,\overline{x}_t)=\frac{d}{dt}\widetilde{\rho}(t),\quad
(u\cdot \nabla\rho)(t,\overline{x}_t)=u\cdot \nabla\widetilde{\rho}(t)=0,
$$
and
$$
\left(\nabla\cdot(\rho B(\rho))\right)(t,\overline{x}_t)
=\widetilde{\rho}\Delta K\ast\rho(t,\overline{x}_t).
$$
Thus, we deduce by the Equation (\ref{eq:1.8}) that  the evolution of $\widetilde{\rho}$ follows
\begin{equation}\label{eq:3.3}
\frac{d}{dt}\widetilde{\rho}+\widetilde{\rho}+\widetilde{\rho}\Delta K\ast\rho(t,\overline{x}_t)=0.
\end{equation}
Combining H\"{o}lder's inequality, Young's inequality and $\beta\in [2,d)$, to obtain
\begin{equation}\label{eq:3.4}
\big|\widetilde{\rho}\Delta K\ast\rho(t,\overline{x}_t)\big|\leq C\|\Delta K\|_{L^1}\|\rho\|^2_{L^\infty}\leq C_0\widetilde{\rho}^2,
\end{equation}
where
$$
C_0=C\|\Delta K\|_{L^1}.
$$
Then we deduce by (\ref{eq:3.3}) and (\ref{eq:3.4}) that
\begin{equation}\label{eq:3.5}
\frac{d}{dt}\widetilde{\rho}\leq -\widetilde{\rho}+C_0\widetilde{\rho}^2\leq C_0\widetilde{\rho}^2.
\end{equation}
As $\|\rho_0\|_{L^\infty}\leq C_\infty$, then we define
\begin{equation}\label{eq:3.6}
\tau_0=\min\left\{ \frac{1}{2C_0C_\infty},T_0 \right\},
\end{equation}
where $T_0$ is  time of local existence, see Remark \ref{rem:1}. Then by solving the differential inequality in (\ref{eq:3.5}), for any $0\leq t\leq \tau_0$, one has
$$
\|\rho(t,\cdot)\|_{L^\infty}\leq 2C_{\infty}.
$$
This completes the proof of Lemma \ref{lem:3.1}.
\end{proof}

\begin{remark}\label{rem:1}
The local existence of Equation (\ref{eq:1.8}) can be established by standard method in the case of $2\leq \beta<d, d>2$. And local time $T_0$ is independent of the constant $A$, the details discussion  can be referred to \cite{Ascasibar.2013,Li.2010}.
\end{remark}

Next, we establish the local $L^2$ estimate of solution to Equation (\ref{eq:1.8}).

\begin{lemma}\label{lem:3.2}
Let $2\leq\beta<d, d>2$, $\rho(t,x)$ is the solution of Equation (\ref{eq:1.8}) with initial data $\rho_0(x)$. Suppose that the $\rho_0(x)$ satisfies (\ref{eq:3.1}) and (\ref{eq:3.2}). Then there exist a time $\tau_1>0$, for any $0\leq t \leq \tau_1$, one has
$$
\|\rho(t,\cdot)-\overline{\rho}(t)\|_{L^2}\leq 2(B_0^2-\overline{\rho}_0^2)^{\frac{1}{2}},
$$
where $\overline{\rho}(t)$ and $\overline{\rho}_0$ are defined in (\ref{eq:1.9}) and (\ref{eq:1.10}).
\end{lemma}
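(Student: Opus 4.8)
The plan is to derive an energy identity for the perturbation $g := \rho - \overline{\rho}$ and control its growth on a short time interval, exactly parallel to the $L^\infty$ argument in Lemma \ref{lem:3.1}. Since $\overline\rho(t)$ solves the pure damping ODE \eqref{eq:1.10} and $\int_{\mathbb{T}^d} g\,dx = 0$ for all $t$, subtracting the mean from Equation \eqref{eq:1.8} gives an evolution equation for $g$. First I would multiply this equation by $g$ and integrate over $\mathbb{T}^d$. The transport term $Au\cdot\nabla g$ contributes nothing because $u$ is divergence-free (integration by parts kills it, independent of $A$ — consistent with Remark \ref{rem:2} that mixing plays no role at this local stage). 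The damping term $\rho$ produces $+\|g\|_{L^2}^2$ on the left, which is favorable. So the only term that must be estimated is the nonlinear term $\nabla\cdot(\rho B(\rho))$.

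The key step is bounding $\left|\int_{\mathbb{T}^d} g\,\nabla\cdot(\rho B(\rho))\,dx\right|$. I would integrate by parts to move the divergence onto $g$, obtaining $\int \rho\, B(\rho)\cdot\nabla g\,dx$, and then estimate using $B(\rho) = \nabla K\ast\rho$. The essential input is that $2\leq\beta<d$ makes the kernel $\nabla K\sim -x/|x|^\beta$ weakly singular, so by Young's inequality $\|B(\rho)\|_{L^\infty}\leq C\|\nabla K\|_{L^q}\|\rho\|_{L^{p}}$ (with a locally integrable $\nabla K$ for $\beta<d$), which can be closed through $\|\rho\|_{L^\infty}$ and $\|\rho\|_{L^2}$. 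Using the $L^\infty$ control from Lemma \ref{lem:3.1} on $[0,\tau_0]$, this yields a bound of the form $C\|\rho\|_{L^\infty}\|\rho\|_{L^2}\|\nabla g\|_{L^2}$ or, after absorbing derivatives more carefully, a differential inequality $\tfrac{d}{dt}\|g\|_{L^2}^2 \leq C_1\|g\|_{L^2}^2 + C_2$, or (most cleanly) a Riccati-type inequality $\tfrac{d}{dt}\|g\|_{L^2}^2\leq C\|g\|_{L^2}^3 + (\text{lower order})$ driven by the weakly-singular nonlinearity.

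Once such a differential inequality is in hand, I would define $\tau_1$ analogously to \eqref{eq:3.6} as the minimum of the local existence time $T_0$ and an explicit time (depending only on $B_0$, $\overline\rho_0$, $C_\infty$, $\beta$, $d$) over which the solution of the inequality, started at $\|g(0)\|_{L^2}^2 = B_0^2 - \overline\rho_0^2$ (note $\|g(0)\|_{L^2}^2 = \|\rho_0\|_{L^2}^2 - |\mathbb{T}^d|\,\overline\rho_0^2 \le B_0^2-\overline\rho_0^2$), cannot more than quadruple; this guarantees $\|g(t)\|_{L^2}\leq 2(B_0^2-\overline\rho_0^2)^{1/2}$ for $t\in[0,\tau_1]$.

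The main obstacle is the nonlinear estimate: I must confirm that the weak singularity $2\le\beta<d$ genuinely renders $\nabla K$ (or the relevant negative-order operator) into a kernel admitting an $L^\infty$ bound on $B(\rho)$ via only $L^\infty$ and $L^2$ norms of $\rho$, without needing any negative-index Sobolev norm at this stage — the subtler mixing-based gain is reserved for the global estimate, so locally a crude but honest bound suffices. The delicate point is whether one full derivative on $g$ appears (from $\nabla g$) and whether it can be absorbed; if it cannot be absorbed into the damping, I would instead integrate by parts to put the derivative on $B(\rho)$, producing $\Delta K\ast\rho$ as in \eqref{eq:3.4}, so that the estimate closes purely in terms of $\|\rho\|_{L^\infty}$ and $\|g\|_{L^2}$ and yields the clean quadratic/cubic differential inequality above.
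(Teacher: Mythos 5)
Your plan is correct and follows essentially the same route as the paper: after the energy identity the nonlinear term is handled by your own stated fallback, namely integrating by parts so that both derivatives land on the kernel, using $\|\Delta K\|_{L^1}<\infty$ (valid since $\beta<d$) together with the $L^\infty$ bound from Lemma \ref{lem:3.1} to get $\frac{d}{dt}\|\rho-\overline{\rho}\|_{L^2}^2\leq C(C_\infty+\overline{\rho}_0)\|\rho-\overline{\rho}\|_{L^2}^2$, and then choosing $\tau_1=\min\{\tau_0,\,2\ln 2/C(C_\infty+\overline{\rho}_0)\}$. Note only that your first-listed option (leaving $\nabla g$ in the estimate) cannot close at the $L^2$ level since there is no dissipation, so the fallback is the actual proof, exactly as in the paper.
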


\begin{proof}
Let us multiply both sides of (\ref{eq:1.8}) by $\rho-\overline{\rho}$ and integrate over $\mathbb{T}^d$, and combining to the impressibility of $u$ and (\ref{eq:1.10}),  we obtain that
\begin{equation}\label{eq:3.7}
\frac{1}{2}\frac{d}{dt}\|\rho-\overline{\rho}\|_{L^2}^2+\|\rho-\overline{\rho}\|^2_{L^2}+\int_{\mathbb{T}^d}\nabla\cdot(\rho B(\rho))(\rho-\overline{\rho})dx=0.
\end{equation}
Combining (\ref{eq:1.2}), $\beta\in [2,d)$,  H\"{o}lder's inequality and Young's inequality, the third term of the left-hand side of (\ref{eq:3.7}) can be estimated as
\begin{equation}\label{eq:3.8}
\begin{aligned}
&\left|\int_{\mathbb{T}^d}\nabla\cdot(\rho B(\rho))(\rho-\overline{\rho})dx\right|\\
=&\left|\frac{1}{2}\int_{\mathbb{T}^d}\Delta K\ast \rho(\rho-\overline{\rho})^2dx+\overline{\rho}\int_{\mathbb{T}^d}\Delta K\ast (\rho-\overline{\rho})(\rho-\overline{\rho})dx\right|\\
\leq& C\|\Delta K\|_{L^1}\| \rho\|_{L^\infty}\|\rho-\overline{\rho}\|^2_{L^2}+C\overline{\rho}\|\Delta K\|_{L^1}\|\rho-\overline{\rho}\|^2_{L^2}\\
\leq&C(\|\rho\|_{L^\infty}+\overline{\rho})\|\rho-\overline{\rho}\|^2_{L^2}.
\end{aligned}
\end{equation}
According to (\ref{eq:1.10}), (\ref{eq:3.8}) and Lemma \ref{lem:3.1}, we imply that for any $t\in [0,\tau_0]$, one has
\begin{equation}\label{eq:3.9}
\left|\int_{\mathbb{T}^d}\nabla\cdot(\rho B(\rho))(\rho-\overline{\rho})dx\right|\leq C( C_\infty+\overline{\rho}_0)\|\rho-\overline{\rho}\|^2_{L^2}.
\end{equation}
Combining (\ref{eq:3.7}) and (\ref{eq:3.9}), for any $t\in [0,\tau_0]$, one get
\begin{equation}\label{eq:3.10}
\begin{aligned}
\frac{d}{dt}\|\rho-\overline{\rho}\|_{L^2}^2\leq -2\|\rho-\overline{\rho}\|^2_{L^2}+C(C_\infty+\overline{\rho}_0)\|\rho-\overline{\rho}\|^2_{L^2}.
\end{aligned}
\end{equation}
We denote
\begin{equation}\label{eq:3.11}
\tau_1=\min\left\{\tau_0, \frac{2\ln 2}{C(C_\infty+\overline{\rho}_0)}\right\}.
\end{equation}
By solving the differential inequality in (\ref{eq:3.10}), for any $0\leq t\leq\tau_1$, one get
$$
\|\rho(t,\cdot)-\overline{\rho}(t)\|_{L^2}\leq 2(B_0^2-\overline{\rho}_0^2)^{\frac{1}{2}}.
$$
This completes the proof of Lemma \ref{lem:3.2}.
\end{proof}

\begin{remark}\label{rem:4}
According to the Lemma  \ref{lem:3.1} and Lemma \ref{lem:3.2}, we deduce that for any $0\leq t\leq\tau_1$ one has
$$
\|\rho(t,\cdot)\|_{L^\infty}\leq 2C_{\infty},\ \ \ \ \ \|\rho(t,\cdot)-\overline{\rho}(t)\|_{L^2}\leq 2(B_0^2-\overline{\rho}_0^2)^{\frac{1}{2}}.
$$
\end{remark}

\subsection{Bootstrap argument}
In this section, we extend the local estimate to global estimate by bootstrap argument. We list the bootstrap assumptions as below:

\begin{assumption}\label{assm:3.3}
Let $2\leq\beta<d, d>2$, $\rho(t,x)$ is the solution of Equation (\ref{eq:1.8}) with initial data $\rho_0(x)$. Suppose that the $\rho_0(x)$ satisfies (\ref{eq:3.1}) and (\ref{eq:3.2}). There exist a $A_0$ is large enough, if $A>A_0$, let us define $T^\ast>0$ to be the end-point of the largest interval $t\in [0,T^\ast]$ such that the following hypotheses hold for all $0\leq t\leq T^\ast$
\begin{equation}\label{eq:3.12}
\|\rho(t,\cdot)\|_{L^\infty}\leq 2C_{\infty},
\end{equation}
\begin{equation}\label{eq:3.13}
\|\rho(t,\cdot)-\overline{\rho}(t)\|_{L^2}\leq 2(B_0^2-\overline{\rho}_0^2)^{\frac{1}{2}}.
\end{equation}
\end{assumption}

\begin{remark}\label{rem:5}
The constant $A_0\geq A_1$, and $A_0$  is determined by the proof, the details can be seen in the later. Here $A_1$  is defined in the Lemma \ref{lem:2.3}.
\end{remark}

\begin{remark}\label{rem:6}
According to the local $L^\infty$ estimate and $L^2$ estimate  of solution, we deduce that $T^\ast\geq \tau_1$. In fact, $T^\ast=\infty$ for large $A$. We prove it by contradiction. Thus, our analysis is based on $T^\ast<\infty$ in next section.
\end{remark}

First, we give an  approximation lemma.

\begin{lemma}\label{lem:3.4}
Let $2\leq\beta<d, d>2$, $u(x)$ is weakly mixing. The $\rho(t,x), \eta(t,x)$ are the solution of Equations (\ref{eq:1.8}) and (\ref{eq:1.11}) respectively, and the $\rho(t,x)$ satisfies the Assumption \ref{assm:3.3}. Then for any $0\leq s\leq s+t\leq T^\ast$, if $\rho(s,x)=\eta(s,x)$, there exist a finite positive constant $C>1$, such that
$$
\|P_{N}(\rho-\eta)(t+s,\cdot)\|_{L^2}\leq  CNe^{N^2 t}C_\infty(B^2_0-\overline{\rho}_0^2)^{\frac{1}{2}}t,
$$
where  $P_N$ is defined in (\ref{eq:2.3}).
\end{lemma}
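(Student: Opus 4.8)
The plan is to study the difference $w:=\rho-\eta$, which upon subtracting Equation (\ref{eq:1.11}) from Equation (\ref{eq:1.8}) solves the forced transport--damping equation
\[
\partial_t w+Au\cdot\nabla w+w=-\nabla\cdot(\rho B(\rho)),
\]
with $w(s,\cdot)=0$ since $\rho(s,\cdot)=\eta(s,\cdot)$. The operator $Au\cdot\nabla+\mathrm{Id}$ generates the semigroup $e^{-\tau}e^{-\tau Au\cdot\nabla}$ (damping and transport commute), so after the time shift $v(t):=w(t+s)$ Duhamel's formula gives
\[
v(t)=-\int_0^t e^{-(t-\tau)}\,e^{-(t-\tau)Au\cdot\nabla}\,\nabla\cdot(\rho B(\rho))(\tau+s)\,d\tau .
\]
Applying $P_N$, passing the $L^2$ norm inside the integral, and invoking Lemma \ref{lem:2.3} (which needs $A>A_1$, hence the condition $A_0\geq A_1$ of Remark \ref{rem:5}) in the form $\|P_N e^{-(t-\tau)Au\cdot\nabla}(\cdot)\|_{L^2}\leq Ce^{N^2(t-\tau)}\|P_N(\cdot)\|_{L^2}$ reduces the whole estimate to an $L^2$ bound on the projected source $P_N\nabla\cdot(\rho B(\rho))$.

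The next step is to estimate this source. Since $P_N$ retains only the Fourier modes $|k|\leq N$ and the divergence contributes a factor $|k|$, I would use
\[
\|P_N\nabla\cdot(\rho B(\rho))\|_{L^2}\leq N\,\|\rho B(\rho)\|_{L^2}\leq N\,\|\rho\|_{L^\infty}\,\|B(\rho)\|_{L^2},
\]
which is the origin of the explicit factor $N$ in the statement. The crucial point --- the mechanism producing the \emph{small} factor $(B_0^2-\overline{\rho}_0^2)^{1/2}$ rather than merely $\|\rho\|_{L^2}$ --- is that $\nabla K$ has zero mean on $\mathbb{T}^d$ (it is the gradient of a periodic function, and the boundary term at the singularity vanishes because $\beta<d+1$), so convolution against the constant $\overline{\rho}$ annihilates it: $B(\rho)=\nabla K\ast\rho=\nabla K\ast(\rho-\overline{\rho})$. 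Young's inequality, together with $\nabla K\in L^1(\mathbb{T}^d)$ (integrable precisely because $|\nabla K|\sim|x|^{1-\beta}$ with $\beta<d$), then yields $\|B(\rho)\|_{L^2}\leq\|\nabla K\|_{L^1}\|\rho-\overline{\rho}\|_{L^2}$.

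Combining these with the bootstrap hypotheses (\ref{eq:3.12})--(\ref{eq:3.13}) of Assumption \ref{assm:3.3}, which are valid on $[0,T^\ast]$ and hence for all shifted times $\tau+s\in[s,t+s]\subseteq[0,T^\ast]$, bounds the projected source uniformly in $\tau$:
\[
\|P_N\nabla\cdot(\rho B(\rho))(\tau+s)\|_{L^2}\leq CN\,C_\infty\,(B_0^2-\overline{\rho}_0^2)^{1/2}.
\]
Substituting into the Duhamel estimate and bounding the time integral crudely via $e^{-(t-\tau)}\leq1$ and $e^{N^2(t-\tau)}\leq e^{N^2 t}$, so that $\int_0^t(\cdots)\,d\tau\leq e^{N^2 t}\,t$, produces exactly
\[
\|P_N(\rho-\eta)(t+s,\cdot)\|_{L^2}\leq CN e^{N^2 t}C_\infty(B_0^2-\overline{\rho}_0^2)^{1/2}t,
\]
which is the claim. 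The only genuinely delicate ingredient is the mean-zero reduction $\nabla K\ast\overline{\rho}=0$: without it one can only control the source by $\|\rho\|_{L^2}$, which does not become small under mixing and would cause the entire weakening-nonlinearity scheme to collapse. Everything else is Duhamel's formula, the semigroup bound of Lemma \ref{lem:2.3}, and routine convolution and projection inequalities.
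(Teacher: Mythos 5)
Your proposal is correct and follows essentially the same route as the paper: subtract the two equations, write $\rho-\eta$ by Duhamel's formula, apply $P_N$ and the semigroup bound of Lemma \ref{lem:2.3}, use the mean-zero reduction $B(\rho)=\nabla K\ast(\rho-\overline{\rho})$ together with Young's inequality and the bootstrap hypotheses to bound the source by $CN\,C_\infty(B_0^2-\overline{\rho}_0^2)^{1/2}$, and integrate. The only cosmetic difference is that you place the semigroup at time $t-\tau$ acting on the source at $\tau+s$ while the paper uses the equivalent change of variables; the substance is identical.
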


\begin{proof}
Consider the Equations (\ref{eq:1.8}) and (\ref{eq:1.11}), to obtain
\begin{equation}\label{eq:3.14}
\partial_t(\rho-\eta)+Au\cdot \nabla (\rho-\eta)+(\rho-\eta)+\nabla\cdot(\rho B(\rho))=0.
\end{equation}
By Duhamel's principe, then the solution $\rho-\eta$  of (\ref{eq:3.14}) can be expressed as
\begin{equation}\label{eq:3.15}
(\rho-\eta)(t+s,x)=\int_{0}^{t}e^{-(Au\cdot \nabla+1) \tau}\left(\nabla\cdot(\rho B(\rho))(t+s-\tau,x)\right)d\tau.
\end{equation}
The operator $P_N$ is applied to (\ref{eq:3.15}), then one get
\begin{equation}\label{eq:3.16}
P_{N}(\rho-\eta)(t+s,x)=\int_{0}^{t}P_{N}e^{-(Au\cdot \nabla+1) \tau}\left(\nabla\cdot(\rho B(\rho))(t+s-\tau,x)\right)d\tau.
\end{equation}
Combining Lemma \ref{lem:2.3}, Assumption \ref{assm:3.3}, H\"{o}lder's inequality and Young's inequality, to obtain
\begin{equation}\label{eq:3.17}
\begin{aligned}
&\|P_{N}e^{-(Au\cdot \nabla+1) \tau}\left(\nabla\cdot(\rho B(\rho))(t+s-\tau,\cdot)\right)\|_{L^2}\\
\leq& \|P_{N}e^{-Au\cdot \nabla \tau}\left(\nabla\cdot(\rho B(\rho))(t+s-\tau,\cdot)\right)\|_{L^2}\\
\leq& Ce^{N^2 \tau}\|P_{N}\left(\nabla\cdot(\rho B(\rho))(t+s-\tau,\cdot)\right)\|_{L^2}\\
\leq& CNe^{N^2 \tau}\|\rho\nabla K\ast(\rho-\overline{\rho})(t+s-\tau,\cdot)\|_{L^2}\\
\leq& CNe^{N^2 \tau}\|\rho(t+s-\tau,\cdot)\|_{L^\infty}\|\nabla K\|_{L^1}\|(\rho-\overline{\rho})(t+s-\tau,\cdot)\|_{L^2}.
\end{aligned}
\end{equation}
For any $0\leq \tau\leq t$, then $t+s-\tau\in [0,T^\ast]$. We deduce by Assumption \ref{assm:3.3} that
\begin{equation}\label{eq:3.18}
\|\rho(t+s-\tau,\cdot)\|_{L^\infty}\leq 2C_\infty,\ \ \ \|(\rho-\overline{\rho})(t+s-\tau,\cdot)\|_{L^2}\leq 2(B^2_0-\overline{\rho}_0^2)^{\frac{1}{2}}.
\end{equation}
As $\|\nabla K\|_{L^1}$ is bounded, then for any $0\leq s\leq s+t\leq T^\ast$, combining (\ref{eq:3.16})-(\ref{eq:3.18}), we have
$$
\begin{aligned}
&\|P_{N}(\rho-\eta)(t+s,\cdot)\|_{L^2}\\
\leq& \int_{0}^{t}\|P_{N}e^{-(Au\cdot \nabla+1) \tau}\left(\nabla\cdot(\rho B(\rho))(t+s-\tau,\cdot)\right)\|_{L^2}d\tau\\
\leq& \int_{0}^{t}CNe^{N^2 \tau}\|\rho(t+s-\tau,\cdot)\|_{L^\infty}\|\nabla K\|_{L^1}\|(\rho-\overline{\rho})(t+s-\tau,\cdot)\|_{L^2}d\tau\\
\leq& 4C\|\nabla K\|_{L^1}Ne^{N^2 t}C_\infty(B^2_0-\overline{\rho}_0^2)^{\frac{1}{2}}t\\
\leq& CNe^{N^2 t}C_\infty(B^2_0-\overline{\rho}_0^2)^{\frac{1}{2}}t.
\end{aligned}
$$
This completes the proof of Lemma \ref{lem:3.4}.
\end{proof}

Next, we improve the $L^\infty$ estimate and $L^2$ estimate  of solution to Equation (\ref{eq:1.8}) by mixing effect. Let us denote $\rho(t,x)$ is the solution of Equation (\ref{eq:1.8}) with initial data $\rho_0(x)$. We define
\begin{equation}\label{eq:3.19}
\Phi(A)=\frac{\|\Lambda^{\beta-d}(\rho-\overline{\rho})\|^2_{L^2}}{\|\rho-\overline{\rho}\|^2_{L^2}}
\end{equation}
to analysis the mixing effect in Equation (\ref{eq:1.8}). Here $\Phi(A)$ can be used to describe the frequency distribution of the solution of the Equation (\ref{eq:1.8}). Based on the idea of contradiction and RAGE theorem, we establish the estimate of $\Phi(A)$. The main lemma is as follows

\begin{lemma}\label{lem:3.5}
Let $2\leq\beta<d, d> 2$, $\rho(t,x)$ is the solution of Equation (\ref{eq:1.8}) with initial data $\rho_0(x)$ and $u(x)$ is weakly mixing.  For any fixed constant $N$  is large and $\epsilon_0$ is small, if the $\rho(t,x)$ satisfies the Assumption \ref{assm:3.3}, then there exist set $\Sigma(t)\subset [0,T^\ast]$, such that for all $t\in\Sigma(t)$, one has
$$
\Phi(A)\geq 2\lambda_N^{\beta-d}.
$$
Then
\begin{equation}\label{eq:3.20}
|\Sigma(t)|<\epsilon_0T^\ast,
\end{equation}
where $\Phi(A)$ is defined in (\ref{eq:3.19}) and $|\Sigma(t)|$ is measure of $\Sigma(t)$.
\end{lemma}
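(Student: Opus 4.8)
The plan is to reduce the statement to a bound on how much of the $L^2$ mass of $\rho-\overline{\rho}$ sits in low frequencies, and then to control that low-frequency mass on short time windows by comparing $\rho$ with the linear damped-transport flow (\ref{eq:1.11}) and invoking the RAGE average of Lemma \ref{lem:2.2}. First I would split the negative-order norm appearing in $\Phi(A)$ at frequency $N$. Since $\beta-d<0$ and $\lambda_N=N^2$, for $|k|>N$ one has $|k|^{2(\beta-d)}\leq\lambda_N^{\beta-d}$, while for $0<|k|\leq N$ one has $|k|^{2(\beta-d)}\leq1$; on the Fourier side this yields
\[
\|\Lambda^{\beta-d}(\rho-\overline{\rho})\|_{L^2}^2\leq\|P_N(\rho-\overline{\rho})\|_{L^2}^2+\lambda_N^{\beta-d}\|\rho-\overline{\rho}\|_{L^2}^2 ,
\]
so that $\Phi(A)\leq\|P_N(\rho-\overline{\rho})\|_{L^2}^2/\|\rho-\overline{\rho}\|_{L^2}^2+\lambda_N^{\beta-d}$. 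Hence $\Phi(A)\geq2\lambda_N^{\beta-d}$ forces $\|P_N(\rho-\overline{\rho})\|_{L^2}^2\geq\lambda_N^{\beta-d}\|\rho-\overline{\rho}\|_{L^2}^2$, and it suffices to bound the measure of the set of times where the low frequencies carry this much mass.

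Next I would argue by contradiction, assuming $|\Sigma(t)|\geq\epsilon_0 T^\ast$, and partition $[0,T^\ast]$ into consecutive windows $I_j=[t_{j-1},t_{j-1}+\tau]$ of a common length $\tau$ to be fixed. On $I_j$ let $\eta_j$ solve (\ref{eq:1.11}) with $\eta_j(t_{j-1})=\rho(t_{j-1})$; since $u$ is divergence free the means agree, $\overline{\eta_j}(t)=\overline{\rho}(t)$, and the mean-zero part of $\eta_j$ equals $e^{-(t-t_{j-1})}$ times the unitary transport of $\rho(t_{j-1})-\overline{\rho}(t_{j-1})$ along the rescaled flow of $Au$. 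Setting $\phi_j=(\rho(t_{j-1})-\overline{\rho}(t_{j-1}))/\|\rho(t_{j-1})-\overline{\rho}(t_{j-1})\|_{L^2}\in S$, Lemma \ref{lem:2.2}, after rescaling time by $A$, gives $\tfrac1\tau\int_{I_j}\|P_N(\eta_j-\overline{\eta_j})\|_{L^2}^2\,dt\leq\sigma\|\rho(t_{j-1})-\overline{\rho}(t_{j-1})\|_{L^2}^2$ as soon as $A\tau\geq T_c(N,\sigma)$, the RAGE time, whose uniformity over $S$ is exactly the content of Lemma \ref{lem:2.2} and Remark \ref{rem:3}. The discrepancy $P_N(\rho-\eta_j)$ is controlled by Lemma \ref{lem:3.4}, which bounds it by a quantity of size $\lesssim N e^{N^2\tau}C_\infty(B_0^2-\overline{\rho}_0^2)^{1/2}\tau$ on $I_j$.

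To conclude, I would fix $N$ large and choose $\sigma$ so small that $C\sigma\lambda_N^{d-\beta}<\epsilon_0$, which fixes $T_c(N,\sigma)$; then take $A\geq A_0$ large enough that $\tau:=T_c/A$ is small enough for the Lemma \ref{lem:3.4} error to be a small fraction of $\lambda_N^{(\beta-d)/2}\|\rho-\overline{\rho}\|_{L^2}$ on each window (the short length also makes $\|\rho(t)-\overline{\rho}(t)\|_{L^2}$ comparable to $\|\rho(t_{j-1})-\overline{\rho}(t_{j-1})\|_{L^2}$ through the two-sided energy inequality behind Lemma \ref{lem:3.2}). On $\Sigma(t)\cap I_j$ the low-frequency lower bound from the first step, minus this small error, forces $\|P_N(\eta_j-\overline{\eta_j})\|_{L^2}^2\geq c\lambda_N^{\beta-d}\|\rho(t_{j-1})-\overline{\rho}(t_{j-1})\|_{L^2}^2$; integrating against the RAGE average and noting that the fluctuation factor cancels yields $|\Sigma(t)\cap I_j|\leq C\sigma\lambda_N^{d-\beta}\tau$. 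Summing over the $T^\ast/\tau$ windows gives $|\Sigma(t)|\leq C\sigma\lambda_N^{d-\beta}T^\ast<\epsilon_0 T^\ast$, the desired contradiction.

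The main obstacle is the tension between two competing time scales: RAGE needs the window long enough ($A\tau\geq T_c(N,\sigma)$) to register mixing, while Lemma \ref{lem:3.4} needs it short enough that the nonlinear drift off the linear flow, which grows like $e^{N^2\tau}$ and is measured at the full fluctuation scale $(B_0^2-\overline{\rho}_0^2)^{1/2}$, stays below $\lambda_N^{(\beta-d)/2}\|\rho(t)-\overline{\rho}(t)\|_{L^2}$. Both conditions can be met at once only because the advection speed rescales the mixing time: taking $\tau=T_c/A$ with $A$ large keeps $A\tau$ fixed while driving $\tau\to0$, so the error vanishes. Making this quantitative, in particular ensuring that the current fluctuation $\|\rho(t)-\overline{\rho}(t)\|_{L^2}$ does not fall below the error scale, is where the damping lower bound on the energy and the scale-invariant normalization built into $\Phi(A)$ are essential, and this is the delicate point of the argument.
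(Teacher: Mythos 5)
Your overall strategy coincides with the paper's: split $\|\Lambda^{\beta-d}(\rho-\overline{\rho})\|_{L^2}^2$ at frequency $N$ to convert $\Phi(A)\geq 2\lambda_N^{\beta-d}$ into the low-frequency mass bound $\|P_N(\rho-\overline{\rho})\|_{L^2}^2\geq\lambda_N^{\beta-d}\|\rho-\overline{\rho}\|_{L^2}^2$ (this is exactly the paper's claim (\ref{eq:3.24})), then work on windows of length $\tau=T_c/A$, compare $\rho$ with the linear damped transport flow via Lemma \ref{lem:3.4}, and apply the RAGE average of Lemma \ref{lem:2.2}. The only organizational difference is that the paper uses a pigeonhole step (claim (\ref{eq:3.26})) to isolate a single window on which the time-average of $\|P_N(\rho-\overline{\rho})\|_{L^2}^2$ is large and contradicts RAGE there, whereas you bound $|\Sigma(t)\cap I_j|$ on every window and sum; these are equivalent.

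There is, however, a genuine gap at the point you yourself flag as delicate, and your proposed fix does not work. The Lemma \ref{lem:3.4} error is measured at the \emph{fixed} scale $Ne^{N^2\tau}C_\infty(B_0^2-\overline{\rho}_0^2)^{1/2}\tau$, while the quantity it must be dominated by is $\lambda_N^{(\beta-d)/2}\|\rho(t)-\overline{\rho}(t)\|_{L^2}$, which carries the \emph{current} fluctuation. You claim a ``damping lower bound on the energy'' keeps the current fluctuation above the error scale, but the damping acts in the opposite direction: the two-sided Gronwall estimate behind Lemma \ref{lem:3.2} only gives $\|\rho(t)-\overline{\rho}(t)\|_{L^2}\geq e^{-Ct}\|\rho_0-\overline{\rho}_0\|_{L^2}$, which tends to zero, so over $[0,T^\ast]$ with $T^\ast$ arbitrary no fixed choice of $A$ (hence of $\tau$) makes the error a small fraction of the fluctuation on every window. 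The comparability of $\|\rho(t)-\overline{\rho}(t)\|_{L^2}$ with its value at the left endpoint of a short window is true but does not address this, since the endpoint value itself may be tiny. The paper resolves this by introducing the threshold $B_1=C\lambda_N^{\beta-d}\|\rho_0-\overline{\rho}_0\|_{L^2}$ in (\ref{eq:3.21}) and assuming (\ref{eq:3.22}), i.e.\ $\|\rho(t)-\overline{\rho}(t)\|_{L^2}\geq B_1$ throughout $[0,T^\ast]$, which pins the error comparison to the fixed scale $B_1$ and lets $A$ be chosen once; the complementary regime $\|\rho-\overline{\rho}\|_{L^2}\leq B_1$ is handled separately in Remark \ref{rem:9}, where the nonlinear term is already so small (for $N$ large) that no mixing input is needed and $\widetilde{\rho}$ decays outright. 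Your proof needs this dichotomy (or an equivalent device) to close; without it the final step ``the Lemma \ref{lem:3.4} error is a small fraction of $\lambda_N^{(\beta-d)/2}\|\rho-\overline{\rho}\|_{L^2}$ on each window'' is unjustified.
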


\begin{proof}
For fixed constant $N$  is large and $\epsilon_0$ is small, define
\begin{equation}\label{eq:3.21}
B_1=C\lambda^{\beta-d}_N\|\rho_0-\overline{\rho}_0\|_{L^2}, \ \ \ k_0=\left[\frac{4}{\epsilon_0}\right]+1,
\end{equation}
where $C>0$ is a fix  constant, $N$ and $\epsilon_0$ are choose in the later. Without loss of generality, we assume that for any $t\in  [0,T^\ast]$, one has
\begin{equation}\label{eq:3.22}
\|\rho(t,\cdot)-\overline{\rho}\|_{L^2}\geq B_1,
\end{equation}
the details can be referred in Remark \ref{rem:9}. Next, We give the proof by contradiction. If (\ref{eq:3.20}) is not true, then there exist a sequence $\{A_n\}_{n=1}^{\infty}$ and a set $\Sigma_1(t)\subset [0,T^\ast]$, such that
$$
\lim_{n\rightarrow \infty}A_n=\infty, \ \ \ |\Sigma_1(t)|=\epsilon_0T^\ast.
$$
And for any $A_n$ and $t\in \Sigma_1(t)$, we have
\begin{equation}\label{eq:3.23}
\frac{\|\Lambda^{\beta-d}(\rho-\overline{\rho})\|^2_{L^2}}{\|\rho-\overline{\rho}\|^2_{L^2}}\geq 2\lambda_N^{\beta-d}.
\end{equation}
Firstly, we deduce by (\ref{eq:3.23}) that the following two conclusions

\vskip .05in

(1) We claim that for any $A_n$ and $t\in \Sigma_1(t)$, one has
\begin{equation}\label{eq:3.24}
\|P_N(\rho-\overline{\rho})\|^2_{L^2}\geq \lambda_N^{\beta-d}\|\rho(t,\cdot)-\overline{\rho}\|^2_{L^2}.
\end{equation}
If (\ref{eq:3.24}) is not true, then there exist $A_{n_0}$ and $t_0\in \Sigma_1(t)$, such that
\begin{equation}\label{eq:3.25}
\|P_N(\rho-\overline{\rho})(t_0)\|^2_{L^2}<\lambda_N^{\beta-d}\|(\rho-\overline{\rho})(t_0)\|^2_{L^2}.
\end{equation}
Then we deduce by (\ref{eq:3.25}) that
$$
\begin{aligned}
\|\Lambda^{\beta-d}(\rho-\overline{\rho})(t_0)\|^2_{L^2}& \leq \|\Lambda^{\beta-d}P_N(\rho-\overline{\rho})(t_0)\|^2_{L^2}+\|\Lambda^{\beta-d}(I-P_N)(\rho-\overline{\rho})(t_0)\|^2_{L^2}\\
&\leq \|P_N(\rho-\overline{\rho})(t_0)\|^2_{L^2}+\lambda^{\beta-d}_N\|(I-P_N)(\rho-\overline{\rho})(t_0)\|^2_{L^2}\\
&<\lambda_N^{\beta-d}\|(\rho-\overline{\rho})(t_0)\|^2_{L^2}+\lambda_N^{\beta-d}\|(I-P_N)(\rho-\overline{\rho})(t_0)\|^2_{L^2}\\
&<2\lambda_N^{\beta-d}\|(\rho-\overline{\rho})(t_0)\|^2_{L^2}.
\end{aligned}
$$
This is in contradiction with (\ref{eq:3.23}), we finish the proof of (\ref{eq:3.24}).

\vskip .05in

(2) We claim that any $A_n$, there exist $t_0\in \Sigma_1(t)$ and $\tau\leq \frac{1}{M}<\frac{\epsilon_0}{2} T^\ast$, where $M$ is arbitrary and large enough, such that
\begin{equation}\label{eq:3.26}
\frac{1}{\tau}\int_{t_0}^{t_0+\tau}\|P_N(\rho(t,\cdot)-\overline{\rho})\|^2_{L^2}dt\geq \frac{2\lambda^{\beta-d}_N}{k_0}B^2_1,
\end{equation}
where $[t_0,t_0+\tau)\subset [0,T^\ast]$. If (\ref{eq:3.26}) is not true, then there exist $A_{n_1}$ and for all $\tau\leq \frac{1}{M}<\frac{\epsilon_0}{2} T^\ast$ and $[t_i, t_i+\tau)\subset [0,T^\ast], t_i\in \Sigma_1(t)$, and
$$
[t_i, t_i+\tau)\cap [t_j, t_j+\tau)=\emptyset,\ \ \ i=0,1,2,\cdots,
$$
such that
\begin{equation}\label{eq:3.27}
\frac{1}{\tau}\int_{t_i}^{t_i+\tau}\|P_N(\rho(t,\cdot)-\overline{\rho})\|^2_{L^2}dt<\frac{2\lambda^{\beta-d}_N}{k_0}B^2_1.
\end{equation}
If we denote
$$
E=\cup_{i}[t_i, t_i+\tau), \ \ \ E_1=E\cap \Sigma_1(t)=\bigcup_{i}([t_i, t_i+\tau)\cap \Sigma_1(t)),
$$
then we deduce that the definition of $E_1$ by
\begin{equation}\label{eq:3.28}
\frac{\epsilon_0}{2}T^\ast< |E_1|\leq \epsilon_0T^\ast.
\end{equation}
Then we have
\begin{equation}\label{eq:3.29}
\begin{aligned}
\int_{E_1}\|P_N(\rho(t,\cdot)-\overline{\rho})\|^2_{L^2}dt&\leq \int_{E}\|P_N(\rho(t,\cdot)-\overline{\rho})\|^2_{L^2}dt\\
&=\sum_{i}\int_{t_i}^{t_i+\tau}\|P_N(\rho(t,\cdot)-\overline{\rho})\|^2_{L^2}dt\\
&<\frac{2\lambda^{\beta-d}_N}{k_0}B^2_1\sum_{i}\tau\leq \frac{2\lambda^{\beta-d}_N}{k_0}B^2_1T^\ast.
\end{aligned}
\end{equation}
Combining (\ref{eq:3.22}) and (\ref{eq:3.24}), we imply that for any $t\in \Sigma_1(t)$, one has
\begin{equation}\label{eq:3.30}
\|P_N(\rho-\overline{\rho})\|^2_{L^2}\geq \lambda_N^{\beta-d}B_1^2.
\end{equation}
Since $E_1\subset \Sigma_1(t)$, then we deduce by (\ref{eq:3.21}), (\ref{eq:3.30}) and (\ref{eq:3.28}) that
\begin{equation}\label{eq:3.31}
\int_{E_1}\|P_N(\rho(t,\cdot)-\overline{\rho})\|^2_{L^2}dt\geq \lambda^{\beta-d}_NB^2_1|E_1|\geq \frac{\lambda^{\beta-d}_N\epsilon_0}{2}B^2_1T^\ast\geq \frac{2\lambda^{\beta-d}_N}{k_0}B^2_1T^\ast.
\end{equation}
Obviously, (\ref{eq:3.29}) and (\ref{eq:3.31}) are contradictory, so we finish the proof of (\ref{eq:3.26}).

\vskip .05in

Next, we prove that the  (\ref{eq:3.26}) is not true by RAGE theorem. we consider the equation
\begin{equation}\label{eq:3.32}
\partial_t\rho+A_n u\cdot \nabla \rho+\rho+\nabla\cdot(\rho B(\rho))=0,\ \ \ \rho_0(x)=\rho(t_0,x),
\end{equation}
and
\begin{equation}\label{eq:3.33}
\partial_t\eta+A_n u\cdot \nabla \eta+\eta=0, \ \ \ \eta_0(x)=\rho(t_0,x).
\end{equation}
Then for the solution $\rho(t,x)$ of equation (\ref{eq:3.32}), we deduce by Assumption \ref{assm:3.3} that
$$
\|\rho(t,\cdot)-\overline{\rho}(t)\|_{L^2}\leq 2(B_0^2-\overline{\rho}^2)^{\frac{1}{2}},\quad \|\rho(t,\cdot)\|_{L^\infty}\leq 2C_{\infty},\quad 0\leq t \leq T^\ast.
$$
Next, we consider the equation (\ref{eq:3.33}), according to the definition of $U^{t}$, one has
$$
\eta(t_0+t,x)-\overline{\rho}=e^{-t}U^{A_nt}(\rho(t_0,x)-\overline{\rho}(t_0)).
$$
Here $U^t$ is the unitary operator associated with weakly mixing flow $u$ and fix $\sigma=\frac{\lambda^{\beta-d}_N B_1^2}{8k_0(B^2_0-\overline{\rho}^2_0)}$, we get $T_c=T_c(N, \sigma, \rho(t_0), U)$, which is the time provided by Lemma \ref{lem:2.2}. There exist $A_0$ such that $\tau=\frac{T_c}{A_0}$, then for $A_n\geq A_0$, by RAGE theorem, one has
\begin{equation}\label{eq:3.34}
\begin{aligned}
&\frac{1}{\tau}\int_{0}^{\tau}\|P_N(\eta(t_0+t,x)-\overline{\rho}(t_0+t))\|_{L^2}^2dt\\
=&\frac{1}{\tau}\int_{0}^{\tau}\|P_Ne^{-t} U^{A_nt}(\rho(t_0,x)-\overline{\rho}(t_0))\|_{L^2}^2dt \\
=&\frac{\|\rho(t_0,x)-\overline{\rho}\|_{L^2}^2}{\tau}\int_{0}^{\tau}\|P_Ne^{-t} U^{A_nt}\frac{(\rho(t_0,x)-\overline{\rho})}{\|\rho(t_0,x)-\overline{\rho}\|_{L^2}}\|_{L^2}^2dt\\
=&\frac{\|\rho(t_0,x)-\overline{\rho}\|_{L^2}^2}{A_n\tau}\int_{0}^{\tau}\|P_N U^{A_nt}\frac{(\rho(t_0,x)-\overline{\rho}}{\|\rho(t_0,x)-\overline{\rho}\|_{L^2}}\|_{L^2}^2dA_nt\\
=&\frac{\|\rho(t_0,x)-\overline{\rho}\|_{L^2}^2}{A_n\tau}\int_{0}^{A_n\tau}\|P_N U^{s}\frac{(\rho(t_0,x)-\overline{\rho})}{\|\rho(t_0,x)-\overline{\rho}\|_{L^2}}\|_{L^2}^2ds\\
\leq& \sigma \|\rho(t_0,x)-\overline{\rho}\|_{L^2}^2\leq \frac{\lambda^{\beta-d}_N}{4k_0}B_1^2,
\end{aligned}
\end{equation}
where $A_n\tau\geq T_c$. For any $t\in [0,\tau]$,  and $M$ is large enough, one has
$$
CN^2e^{2N^2 t}C_\infty(B^2_0-\overline{\rho}_0^2)t^2\leq CN^2e^{2N^2 \tau}C^2_\infty(B^2_0-\overline{\rho}_0^2)\tau^2\leq \frac{\lambda^{\beta-d}_N}{4k_0}B_1^2.
$$
According to Lemma \ref{lem:3.4}, one has
\begin{equation}\label{eq:3.35}
\|P_N(\rho-\eta)(t_0+t,\cdot)\|^2_{L^2}\leq \frac{\lambda^{\beta-d}_N}{4k_0}B^2_1.
\end{equation}
Furthermore, we deduce by (\ref{eq:3.34}) and (\ref{eq:3.35}) that
\begin{equation}\label{eq:3.36}
\begin{aligned}
\frac{1}{\tau}\int_{0}^{\tau}\|P_N(\rho(t_0+t,\cdot)-\overline{\rho})\|_{L^2}^2dt
& \leq\frac{2}{\tau}\int_{0}^{\tau}\|P_N(\eta(t_0+t,\cdot)-\overline{\rho})\|_{L^2}^2dt\\
&\quad +\frac{2}{\tau}\int_{0}^{\tau}\|P_N(\rho(t_0+t,\cdot)-\eta(t_0+t,\cdot))\|_{L^2}^2dt\\
&\leq\frac{\lambda^{\beta-d}_N}{k_0}B^2_1.
\end{aligned}
\end{equation}
Then we deduce by (\ref{eq:3.36}) that
\begin{equation}\label{eq:3.37}
\frac{1}{\tau}\int_{t_0}^{t_0+\tau}\|P_N(\rho(t,\cdot)-\overline{\rho})\|^2_{L^2}dt=\frac{1}{\tau}\int_{0}^{\tau}\|P_N(\rho(t_0+t,\cdot)-\overline{\rho})\|_{L^2}^2dt\leq \frac{\lambda^{\beta-d}_N}{k_0}B^2_1.
\end{equation}
Obviously, (\ref{eq:3.26}) and (\ref{eq:3.37}) are contradictory. Then the (\ref{eq:3.26}) is not true. This completes the proof of Lemma \ref{lem:3.5}.
\end{proof}

\begin{remark}\label{rem:7}
In fact, for any $t\in [0,T^\ast]\backslash \Sigma(t)$, we can prove that
$$
\lim_{A\rightarrow \infty}\Phi(A)=0.
$$
And according to the definition of $\Phi(A)$ in (\ref{eq:3.19}), we know that the $\Phi(A)$ depends on $\beta, d, \rho_0, A$, then we deduce that $A_0=A(\beta,d, \rho_0)$.
\end{remark}

\begin{remark}\label{rem:8}
According the definition of $\Phi(A)$ and $2\leq \beta<d$, we know that
$$
\Phi(A)\leq 1.
$$
\end{remark}

we will show the following proposition to improve the Assumption \ref{assm:3.3} by mixing effect.

\begin{proposition}\label{prop:3.6}
Let $2\leq\beta<d, d> 2$, $\rho(t,x)$ is the solution of Equation (\ref{eq:1.8}) with initial data $\rho_0(x)$. If $u(x)$ is weakly mixing and the $\rho(t,x)$ satisfies the Assumption \ref{assm:3.3}.  Then for $A$ is large enough, one has
$$
\|\rho(T^\ast,\cdot)\|_{L^\infty}\leq C_{\infty},
$$
$$
\|\rho(T^\ast,\cdot)-\overline{\rho}(T^\ast)\|_{L^2}\leq (B_0^2-\overline{\rho}_0^2)^{\frac{1}{2}}.
$$
\end{proposition}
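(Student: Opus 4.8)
The plan is to improve the two bootstrap bounds (\ref{eq:3.12})--(\ref{eq:3.13}) \emph{simultaneously}, each using the other's (factor-$2$) bootstrap bound rather than the improved one, so that no circularity arises. The engine is Lemma \ref{lem:3.5}: the ``bad'' time set $\Sigma(t)$ on which the frequency ratio $\Phi(A)$ fails to be small has measure $<\epsilon_0 T^\ast$, while on its complement mixing forces $\Phi(A)$ to be small (Remark \ref{rem:7}), and $\Phi(A)\le1$ always (Remark \ref{rem:8}). In both estimates the linear damping $+\rho$ in (\ref{eq:1.8}) supplies a \emph{fixed} exponential decay rate; I dominate the nonlinear term by this damping on the good set (where it carries a small mixing factor), keep only a crude bound on the small set $\Sigma(t)$, and let Gr\"onwall convert the resulting net negative exponent into the gain of the factor $2$.

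For the $L^2$ bound I start from the energy identity (\ref{eq:3.7}) and the splitting (\ref{eq:3.8}). The decisive algebraic fact is $|\widehat{\Delta K}(k)|\le C|k|^{\beta-d}$, so that $\|\Delta K\ast(\rho-\overline{\rho})\|_{L^2}\le C\|\Lambda^{\beta-d}(\rho-\overline{\rho})\|_{L^2}=C\,\Phi(A)^{1/2}\|\rho-\overline{\rho}\|_{L^2}$. Pairing this $L^2$ factor with the bootstrap $L^\infty$ bound (\ref{eq:3.12}) through H\"older's inequality yields
\[
\Big|\int_{\mathbb{T}^d}\nabla\cdot(\rho B(\rho))(\rho-\overline{\rho})\,dx\Big|\le C(C_\infty+\overline{\rho}_0)\,\Phi(A)^{1/2}\|\rho-\overline{\rho}\|_{L^2}^2,
\]
hence $\frac{d}{dt}\|\rho-\overline{\rho}\|_{L^2}^2\le\big(-2+C(C_\infty+\overline{\rho}_0)\Phi(A)^{1/2}\big)\|\rho-\overline{\rho}\|_{L^2}^2$. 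Splitting $\int_0^{T^\ast}$ into $\Sigma(t)$ (where I use only $\Phi(A)\le1$ together with $|\Sigma(t)|<\epsilon_0T^\ast$) and its complement (where $\Phi(A)^{1/2}\le\sqrt2\,\lambda_N^{(\beta-d)/2}$ is small for large $N$), I choose $N$ large and $\epsilon_0$ small so that $\int_0^{T^\ast}C(C_\infty+\overline{\rho}_0)\Phi(A)^{1/2}\,dt\le T^\ast$; the net exponent is then $\le -T^\ast$ and Gr\"onwall gives $\|\rho(T^\ast)-\overline{\rho}(T^\ast)\|_{L^2}\le(B_0^2-\overline{\rho}_0^2)^{1/2}$.

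For the $L^\infty$ bound I use the maximum-principle evolution (\ref{eq:3.3}), $\frac{d}{dt}\widetilde{\rho}=-\widetilde{\rho}-\widetilde{\rho}\,(\Delta K\ast(\rho-\overline{\rho}))(t,\overline{x}_t)$. Since $\Phi(A)$ controls $\Delta K\ast(\rho-\overline{\rho})$ only in $L^2$ while the argument needs a pointwise bound, and $\Delta K\sim|x|^{-\beta}\notin L^\infty$, I split the kernel smoothly as $\Delta K=\Delta K(1-\chi_r)+\Delta K\chi_r$ with $\chi_r$ a smooth cutoff vanishing on $\{|x|<r\}$. The near part satisfies $\|\Delta K(1-\chi_r)\|_{L^1}\le Cr^{d-\beta}$, so it contributes at most $C(C_\infty+\overline{\rho}_0)r^{d-\beta}\widetilde{\rho}$, made $\le\delta\widetilde{\rho}$ by fixing $r$ small; the far kernel $\Delta K\chi_r$ is smooth, and Cauchy--Schwarz in frequency (pairing $|k|^{d-\beta}$ with $|k|^{\beta-d}$) gives $\|(\Delta K\chi_r)\ast(\rho-\overline{\rho})\|_{L^\infty}\le C_r\|\Lambda^{\beta-d}(\rho-\overline{\rho})\|_{L^2}=C_r\Phi(A)^{1/2}\|\rho-\overline{\rho}\|_{L^2}$, small on the good set once $N$ (and then $A$) is large. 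Thus on $[0,T^\ast]\setminus\Sigma(t)$ one has $\frac{d}{dt}\widetilde{\rho}\le-(1-\delta)\widetilde{\rho}$, while on $\Sigma(t)$ I keep the crude bound (\ref{eq:3.4}), $\frac{d}{dt}\widetilde{\rho}\le(2C_0C_\infty-1)\widetilde{\rho}$. Integrating and using $|\Sigma(t)|<\epsilon_0T^\ast$ with $\epsilon_0$ small relative to the crude growth constant produces a negative net exponent, giving $\|\rho(T^\ast)\|_{L^\infty}\le C_\infty$.

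The main obstacle is exactly this pointwise $L^\infty$ control of $\Delta K\ast(\rho-\overline{\rho})$, because mixing only yields smallness of a negative-order $L^2$ Sobolev norm: one must trade the unavoidable $L^1$ singularity of the kernel (absorbed by the near part via the a priori $L^\infty$ bound) against its smooth far part (handled by mixing), which forces the parameters $r$, then $N$, then $\epsilon_0$, and finally $A_0$ to be fixed in that order. A final bookkeeping point is to check that these choices are compatible with those already made in Lemma \ref{lem:3.5}, so that a single threshold $A_0=A(\beta,\rho_0,d)$ serves both estimates.
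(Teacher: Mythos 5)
Your proposal is correct, and its overall architecture is the same as the paper's: both halves run a Gr\"onwall argument on $[0,T^\ast]$, splitting time into the exceptional set $\Sigma(t)$ from Lemma \ref{lem:3.5} (where only $\Phi(A)\le 1$ and $|\Sigma(t)|<\epsilon_0T^\ast$ are used) and its complement (where $\Phi(A)<2\lambda_N^{\beta-d}$), so that the fixed damping rate wins. For the $L^2$ estimate your route is essentially the paper's: the paper reaches the factor $\Phi(A)^{1/4}$ via $\Delta K=\Lambda^{2-\frac{\beta-d}{2}}K\ast\Lambda^{\frac{\beta-d}{2}}(\cdot)$, Young's inequality and the interpolation (\ref{eq:3.42}), whereas you go directly through $|\widehat{\Delta K}(k)|\lesssim|k|^{\beta-d}$ and get $\Phi(A)^{1/2}$; both powers are equally usable. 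The genuine divergence is in the $L^\infty$ estimate. The paper writes the nonlinear term as $\rho\,\Lambda^{2-\frac{\beta-d}{2}}K\ast\Lambda^{\frac{\beta-d}{2}}(\rho-\overline{\rho})$ and invokes the Gagliardo--Nirenberg inequality (\ref{eq:3.48}), $\|\Lambda^{\frac{\beta-d}{2}}(\rho-\overline{\rho})\|_{L^\infty}\le C\|\Lambda^{\beta-d}(\rho-\overline{\rho})\|_{L^2}^{1-\theta}\|\rho-\overline{\rho}\|_{L^\infty}^{\theta}$ with $\theta=\frac{2d-\beta}{3d-2\beta}$, which is an interpolation at the $L^\infty$ endpoint with negative-order derivatives and hence the most delicate ingredient of the paper's proof. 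Your near/far kernel splitting $\Delta K=\Delta K(1-\chi_r)+\Delta K\chi_r$ avoids that inequality entirely: the singular near part is absorbed into the damping using only $\|\Delta K(1-\chi_r)\|_{L^1}\lesssim r^{d-\beta}$ and the bootstrap $L^\infty$ bound, and the smooth far part is paired with $\|\Lambda^{\beta-d}(\rho-\overline{\rho})\|_{L^2}$ by Cauchy--Schwarz in frequency. This is more elementary and arguably more robust, at the cost of one extra parameter $r$ fixed before $N$, $\epsilon_0$ and $A_0$ (your ordering is consistent). One small bookkeeping item you omit: the paper must separately treat the regime $\|\rho-\overline{\rho}\|_{L^2}\le B_1$ where the hypothesis (\ref{eq:3.22}) underlying Lemma \ref{lem:3.5} fails (its Remark \ref{rem:9}); note that your far-part bound $C_r\Phi(A)^{1/2}\|\rho-\overline{\rho}\|_{L^2}\le 2C_rB_1$ in fact disposes of this case directly for $N$ large, so your argument closes, but the case distinction should be stated.
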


\begin{proof}
(1) {\bf $L^2$ estimate}: For the fourth term of left-hand side of (\ref{eq:3.7}) and (\ref{eq:3.8}), one has
\begin{equation}\label{eq:3.38}
\begin{aligned}
&\int_{\mathbb{T}^d}\nabla\cdot(\rho B(\rho))(\rho-\overline{\rho})dx\\
=&\frac{1}{2}\int_{\mathbb{T}^d}\Delta K\ast \rho(\rho-\overline{\rho})^2dx+\overline{\rho}\int_{\mathbb{T}^d}\Delta K\ast (\rho-\overline{\rho})(\rho-\overline{\rho})dx\\
=&\frac{1}{2}\int_{\mathbb{T}^d}\Delta K\ast (\rho-\overline{\rho})(\rho-\overline{\rho})^2dx+\overline{\rho}\int_{\mathbb{T}^d}\Delta K\ast (\rho-\overline{\rho})(\rho-\overline{\rho})dx.
\end{aligned}
\end{equation}
By H\"{o}lder's inequality and Young's inequality, the first term of right-hand side of (\ref{eq:3.38}) can be estimated as
\begin{equation}\label{eq:3.39}
\begin{aligned}
&\left|\frac{1}{2}\int_{\mathbb{T}^d}\Delta K\ast (\rho-\overline{\rho})(\rho-\overline{\rho})^2dx\right|\\
=&\left|\frac{1}{2}\int_{\mathbb{T}^d}\Lambda^{2-\frac{\beta-d}{2}} K\ast \Lambda^{\frac{\beta-d}{2}}(\rho-\overline{\rho})(\rho-\overline{\rho})^2dx\right|\\
\leq& C\|\Lambda^{2-\frac{\beta-d}{2}} K\|_{L^1}\|\Lambda^{\frac{\beta-d}{2}}(\rho-\overline{\rho})\|_{L^2}\|\rho-\overline{\rho}\|_{L^2}\|\rho-\overline{\rho}\|_{L^\infty}\\
\leq& CC_\infty\|\Lambda^{\frac{\beta-d}{2}}(\rho-\overline{\rho})\|_{L^2}\|\rho-\overline{\rho}\|_{L^2},
\end{aligned}
\end{equation}
and
\begin{equation}\label{eq:3.40}
\begin{aligned}
&\left|\overline{\rho}\int_{\mathbb{T}^d}\Delta K\ast (\rho-\overline{\rho})(\rho-\overline{\rho})dx\right|\\
=&\left|\overline{\rho}\int_{\mathbb{T}^d}\Lambda^{2-\frac{\beta-d}{2}} K\ast \Lambda^{\frac{\beta-d}{2}}(\rho-\overline{\rho})(\rho-\overline{\rho})dx\right|\\
\leq& C\overline{\rho}_0\|\Lambda^{\frac{\beta-d}{2}}(\rho-\overline{\rho})\|_{L^2}\|\rho-\overline{\rho}\|_{L^2}.
\end{aligned}
\end{equation}
Combining (\ref{eq:3.7}) and (\ref{eq:3.38})-(\ref{eq:3.40}), to obtain $t\in [0,T^\ast]$
\begin{equation}\label{eq:3.41}
\frac{d}{dt}\|\rho-\overline{\rho}\|_{L^2}^2\leq -2\|\rho-\overline{\rho}\|^2_{L^2}+C(C_\infty+\overline{\rho}_0)\|\Lambda^{\frac{\beta-d}{2}}(\rho-\overline{\rho})\|_{L^2}\|\rho-\overline{\rho}\|_{L^2}.
\end{equation}
By Gagliardo-Nirenberg inequality, one has
\begin{equation}\label{eq:3.42}
\|\Lambda^{\frac{\beta-d}{2}}(\rho-\overline{\rho})\|_{L^2}\leq C\|\Lambda^{\beta-d}(\rho-\overline{\rho})\|^{\frac{1}{2}}_{L^2}\|\rho-\overline{\rho}\|^{\frac{1}{2}}_{L^2}.
\end{equation}
For the second term of right-hand side of (\ref{eq:3.41}), we deduce by (\ref{eq:3.19}) and (\ref{eq:3.42}) that
\begin{equation}\label{eq:3.43}
\begin{aligned}
&C(C_\infty+\overline{\rho}_0)\|\Lambda^{\frac{\beta-d}{2}}(\rho-\overline{\rho})\|_{L^2}\|\rho-\overline{\rho}\|_{L^2}\\
\leq& C(C_\infty+\overline{\rho}_0)\|\Lambda^{\beta-d}(\rho-\overline{\rho})\|^{\frac{1}{2}}_{L^2}\|\rho-\overline{\rho}\|^{\frac{3}{2}}_{L^2}\\
\leq& C(C_\infty+\overline{\rho}_0)(\Phi(A))^{\frac{1}{4}}\|\rho-\overline{\rho}\|^{2}_{L^2}.
\end{aligned}
\end{equation}
According to (\ref{eq:3.41}) and (\ref{eq:3.43}), one has
\begin{equation}\label{eq:3.44}
\frac{d}{dt}\|\rho-\overline{\rho}\|_{L^2}^2\leq -2\|\rho-\overline{\rho}\|^2_{L^2}+ C(C_\infty+\overline{\rho}_0)(\Phi(A))^{\frac{1}{4}}\|\rho-\overline{\rho}\|^{2}_{L^2}\\
\end{equation}
If $\rho(t,x)$ satisfies (\ref{eq:3.22}), then we deduce by Lemma \ref{lem:3.5}, Remark \ref{rem:8} and Gronwall's inequality, one has
\begin{equation}\label{eq:3.45}
\begin{aligned}
\|(\rho-\overline{\rho})(T^\ast,\cdot)\|_{L^2}^2&\leq\|(\rho_0-\overline{\rho}_0)\|_{L^2}^2\exp\left(\int_{0}^{T^\ast}-2+ C(C_\infty+\overline{\rho}_0)(\Phi(A))^{\frac{1}{4}}dt \right)\\
&\leq \|(\rho_0-\overline{\rho}_0)\|_{L^2}^2e^{-2T^\ast}\exp\left(\int_{0}^{T^\ast}C(C_\infty+\overline{\rho}_0)(\Phi(A))^{\frac{1}{4}}dt \right)\\
&\leq \|(\rho_0-\overline{\rho}_0)\|_{L^2}^2e^{-2T^\ast}\exp\left(\int_{\Sigma(t)}C_1(\Phi(A))^{\frac{1}{4}}dt+\int_{[0,T^\ast]\backslash \Sigma(t)}C_1(\Phi(A))^{\frac{1}{4}}dt\right)\\
&\leq \|(\rho_0-\overline{\rho}_0)\|_{L^2}^2e^{-2T^\ast}e^{C_1\epsilon_0T^\ast}e^{C_1T^\ast(\Phi(A))^{\frac{1}{4}}},
\end{aligned}
\end{equation}
where $C_1=C(C_\infty+\overline{\rho}_0)$, choose $\epsilon_0$  is small,  $N$ and $A$ are large enough, such that
\begin{equation}\label{eq:3.46}
C_1\epsilon_0+C_1(\Phi(A))^{\frac{1}{4}}<2.
\end{equation}
Combining (\ref{eq:3.45}) and (\ref{eq:3.46}), to obtain
$$
\|(\rho-\overline{\rho})(T^\ast)\|_{L^2}\leq  \|(\rho_0-\overline{\rho}_0)\|_{L^2}\leq(B^2_0-\overline{\rho}_0^2)^{\frac{1}{2}}.
$$

(2) {\bf $L^\infty$ estimate}: For the third term of left-hand side of (\ref{eq:3.3}) and (\ref{eq:3.4}), one has
\begin{equation}\label{eq:3.47}
\begin{aligned}
\big|\widetilde{\rho}\Delta K\ast\rho(t,\overline{x}_t)\big|&\leq \|\rho\Delta K\ast\rho\|_{L^\infty}=\|\rho\Delta K\ast(\rho-\overline{\rho})\|_{L^\infty}\\
&=\|\rho\Lambda^{2-\frac{\beta-d}{2}} K\ast \Lambda^{\frac{\beta-d}{2}}(\rho-\overline{\rho})\|_{L^\infty}\\
&\leq C\|\rho\|_{L^\infty}\|\Lambda^{2-\frac{\beta-d}{2}} K\|_{L^1}\|\Lambda^{\frac{\beta-d}{2}}(\rho-\overline{\rho})\|_{L^\infty}\\
&\leq C\|\rho\|_{L^\infty}\|\Lambda^{\frac{\beta-d}{2}}(\rho-\overline{\rho})\|_{L^\infty}.
\end{aligned}
\end{equation}
By Gagliardo-Nirenberg inequality, one has
\begin{equation}\label{eq:3.48}
\|\Lambda^{\frac{\beta-d}{2}}(\rho-\overline{\rho})\|_{L^\infty}\leq C\|\Lambda^{\beta-d}(\rho-\overline{\rho})\|^{1-\theta}_{L^2}\|\rho-\overline{\rho}\|^{\theta}_{L^\infty},
\end{equation}
where
\begin{equation}\label{eq:3.49}
\theta=\frac{2d-\beta}{3d-2\beta}.
\end{equation}
Combining (\ref{eq:3.47}) and (\ref{eq:3.48}), we deduce by (\ref{eq:3.19}) that for any $t\in [0,\tau_1]$, one has
\begin{equation}\label{eq:3.50}
\begin{aligned}
\big|\widetilde{\rho}\Delta K\ast\rho(t,\overline{x}_t)\big|&\leq C\|\rho\|_{L^\infty}\|\Lambda^{\beta-d}(\rho-\overline{\rho})\|^{1-\theta}_{L^2}\|\rho-\overline{\rho}\|^{\theta}_{L^\infty}\\
&\leq C(\Phi(A))^{\frac{1-\theta}{2}}\|\rho\|_{L^\infty}\|\rho-\overline{\rho}\|^{1-\theta}_{L^2}\|\rho-\overline{\rho}\|^{\theta}_{L^\infty}\\
&\leq C( B_0^2-\overline{\rho}_0^2)^{\frac{1-\theta}{2}}C^\theta_\infty(\Phi(A))^{\frac{1-\theta}{2}}\|\rho\|_{L^\infty}\\
& \leq C_2(\Phi(A))^{\frac{1-\theta}{2}}\widetilde{\rho},
\end{aligned}
\end{equation}
where
$$
C_2=C( B_0^2-\overline{\rho}_0^2)^{\frac{1-\theta}{2}}C^\theta_\infty.
$$
Combining (\ref{eq:3.3}) and (\ref{eq:3.50}), we have
\begin{equation}\label{eq:3.51}
\frac{d}{dt}\widetilde{\rho}\leq -\widetilde{\rho}+C_2(\Phi(A))^{\frac{1-\theta}{2}}\widetilde{\rho}.
\end{equation}
If $\rho(t,x)$ satisfies (\ref{eq:3.22}), then we deduce by Lemma \ref{lem:3.5}, Remark \ref{rem:8} and Gronwall's inequality, one has
\begin{equation}\label{eq:3.52}
\begin{aligned}
\widetilde{\rho}(T^\ast)&\leq \widetilde{\rho}(0)\exp\left(\int_{0}^{T^\ast} -1+C_2(\Phi(A))^{\frac{1-\theta}{2}}dt\right)\\
&\leq \widetilde{\rho}(0)e^{-T^\ast}\exp\left(\int_{\Sigma(t)} C_2(\Phi(A))^{\frac{1-\theta}{2}}dt+\int_{[0,T^\ast]\backslash \Sigma(t)} C_2(\Phi(A))^{\frac{1-\theta}{2}}dt\right)\\
&\leq \widetilde{\rho}(0)e^{-T^\ast}e^{C_2\epsilon_0 T^\ast}e^{C_2(\Phi(A))^{\frac{1-\theta}{2}}T^\ast}.
\end{aligned}
\end{equation}
We choose $\epsilon_0$  is small,  $N$ and $A$ are large enough, such that
\begin{equation}\label{eq:3.53}
C_2\epsilon_0+C_2(\Phi(A))^{\frac{1-\theta}{2}}<1.
\end{equation}
Combining (\ref{eq:3.52}) and (\ref{eq:3.53}), to obtain
$$
\widetilde{\rho}(T^\ast)\leq \widetilde{\rho}(0).
$$
Then we have
$$
\|\rho(T^\ast,\cdot)\|_{L^\infty}\leq C_{\infty}.
$$
This completes the proof of Proposition \ref{prop:3.6}.
\end{proof}

\begin{remark}\label{rem:9}
In fact, the assumption (\ref{eq:3.22}) is reasonable in the proof of Lemma \ref{lem:3.5}. Because if there exist a $t_0\in [0,T^\ast]$, such that
$$
\|(\rho-\overline{\rho})(t_0)\|_{L^2}\leq B_1,
$$
according to the Lemma \ref{lem:3.2}, we deduce that for any $t\in [0,\tau_1]$, one has
$$
\|(\rho-\overline{\rho})(t_0+t)\|_{L^2}\leq 2B_1.
$$
By the similar with (\ref{eq:3.50}), one has
\begin{equation}\label{eq:3.54}
\begin{aligned}
\big|\widetilde{\rho}\Delta K\ast\rho(t,\overline{x}_t)\big|&\leq C\|\rho\|_{L^\infty}\|\Lambda^{\beta-d}(\rho-\overline{\rho})\|^{1-\theta}_{L^2}\|\rho-\overline{\rho}\|^{\theta}_{L^\infty}\\
&\leq C(\Phi(A))^{\frac{1-\theta}{2}}\|\rho\|_{L^\infty}\|\rho-\overline{\rho}\|^{1-\theta}_{L^2}\|\rho-\overline{\rho}\|^{\theta}_{L^\infty}\\
&\leq CB_1^{1-\theta}C^\theta_\infty(\Phi(A))^{\frac{1-\theta}{2}}\|\rho\|_{L^\infty}\\
& \leq CB_1^{1-\theta}C^\theta_\infty\|\rho\|_{L^\infty}\\
&\leq C_2B_1^{1-\theta}\widetilde{\rho},
\end{aligned}
\end{equation}
where $\theta$ is defined in (\ref{eq:3.49}) and
$$
C_2= CC^\theta_\infty.
$$
According to the definition $B_1$ in (\ref{eq:3.21}), we choose $N$ is large, such that
\begin{equation}\label{eq:3.55}
C_2B_1^{1-\theta}<\frac{1}{2}.
\end{equation}
Combining (\ref{eq:3.3}), (\ref{eq:3.24}) and (\ref{eq:3.54}), we have
$$
\frac{d}{dt}\widetilde{\rho}\leq -\widetilde{\rho}+C_2B_1^{1-\theta}\widetilde{\rho}<-\frac{1}{2}\widetilde{\rho}.
$$
By Gronwall's inequality, one has for any $t\in [0,\tau_1]$
$$
\widetilde{\rho}(t_0+t)\leq \widetilde{\rho}(t_0)e^{-\frac{t}{2}}.
$$
\end{remark}

\begin{remark}\label{rem:10}
In fact, we choose $\epsilon_0$ and $N$ must satisfy the (\ref{eq:3.46}), (\ref{eq:3.53}) and (\ref{eq:3.55}). Specifically, we choose $N_0>0$, such that $N>N_0$, one has
\begin{equation}\label{eq:3.56}
C_0\epsilon_0+C_0(\Phi(A))^{\frac{1}{4}}\leq C_0\epsilon_0+C_0(2\lambda^{\beta-d}_N)^{\frac{1}{4}}<2,
\end{equation}
we choose $N_1>0$, such that $N>N_1$, one has
\begin{equation}\label{eq:3.57}
C_1\epsilon_0+C_1(\Phi(A))^{\frac{1-\theta}{2}}\leq C_1\epsilon_0+C_1(2\lambda^{\beta-d}_N)^{\frac{1-\theta}{2}}<1.
\end{equation}
And we choose $N_2>0$, such that $N>N_2$, one has
\begin{equation}\label{eq:3.58}
C_2B_1^{1-\theta}=C_2(C\lambda^{\beta-d}_N\|\rho_0-\overline{\rho}_0\|_{L^2})^{1-\theta}<\frac{1}{2}.
\end{equation}
Combining (\ref{eq:3.56}), (\ref{eq:3.57}) and (\ref{eq:3.58}), we choose $N$, such that
$$
N> \max \{N_0, N_1, N_2\}.
$$
\end{remark}

Now, we establish global $L^\infty$ estimate of the solution to Equation (\ref{eq:1.8}) in the case of weakly mixing.

\begin{corollary}[Global $L^\infty$ estimate]\label{cor:3.7}
Let $2\leq\beta<d, d> 2$, $\rho(t,x)$ is the solution of Equation (\ref{eq:1.8}) with initial data $\rho_0(x)$. If $u(x)$ is weakly mixing and the $\rho_0(x)$ satisfies (\ref{eq:3.1}) and (\ref{eq:3.2}). Under the Assumption \ref{assm:3.3}, then there exist a positive constant $A_0=A(\beta,\rho_0, d)$, such that for $A\geq A_0$, we have
$$
\|\rho(t,\cdot)\|_{L^\infty}\leq 2C_{\infty},\quad  t\in [0,+\infty].
$$
\end{corollary}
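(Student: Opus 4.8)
The plan is to prove $T^*=\infty$ by a continuity (bootstrap) argument and then read off the global bound directly from hypothesis (\ref{eq:3.12}). First I would verify that the set of admissible times is nonempty: by the local estimates Lemma \ref{lem:3.1} and Lemma \ref{lem:3.2} (collected in Remark \ref{rem:4}), both (\ref{eq:3.12}) and (\ref{eq:3.13}) hold on $[0,\tau_1]$, so $T^*\geq \tau_1>0$. Since $\rho$ is a classical solution, the maps $t\mapsto \|\rho(t,\cdot)\|_{L^\infty}$ and $t\mapsto \|\rho(t,\cdot)-\overline{\rho}(t)\|_{L^2}$ are continuous in $t$; because the bounds (\ref{eq:3.12}) and (\ref{eq:3.13}) are non-strict, they pass to the limit and therefore persist at the endpoint $t=T^*$. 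Thus $T^*$ is attained and Proposition \ref{prop:3.6} may be invoked at $T^*$.

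Next, arguing by contradiction, I would assume $T^*<\infty$ and apply Proposition \ref{prop:3.6} to obtain the strictly improved bounds $\|\rho(T^*,\cdot)\|_{L^\infty}\leq C_\infty$ and $\|\rho(T^*,\cdot)-\overline{\rho}(T^*)\|_{L^2}\leq (B_0^2-\overline{\rho}_0^2)^{\frac12}$. These are available once $A\geq A_0$, with $A_0$ fixed as in Remark \ref{rem:5} and Remark \ref{rem:10} so that the smallness conditions (\ref{eq:3.46}), (\ref{eq:3.53}) and (\ref{eq:3.55}) are all satisfied; I would also record that $A_0\geq A_1$ so that the semigroup bound Lemma \ref{lem:2.3} used inside Proposition \ref{prop:3.6} applies. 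The recovery of the factor $2$ in each norm is exactly what makes $\rho(T^*,\cdot)$ admissible data in the sense of (\ref{eq:3.1})--(\ref{eq:3.2}): it is nonnegative by the maximum principle, it lies in $W^{3,\infty}(\mathbb{T}^d)$ by classical regularity, its $L^\infty$ norm is bounded by $C_\infty$, and since $\overline{\rho}(T^*)=e^{-T^*}\overline{\rho}_0\leq \overline{\rho}_0$ by (\ref{eq:1.10}) its $L^2$ norm is controlled by $B_0$.

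I would then restart the local theory from time $T^*$. By Remark \ref{rem:1} the local existence time, as well as the constants $\tau_0,\tau_1$ appearing in Lemma \ref{lem:3.1} and Lemma \ref{lem:3.2}, depend only on $C_\infty,B_0,\beta,d$ and \emph{not} on $A$. Applying these lemmas to the time-shifted solution $t\mapsto \rho(T^*+t,\cdot)$ yields $\|\rho(t,\cdot)\|_{L^\infty}\leq 2C_\infty$ and $\|\rho(t,\cdot)-\overline{\rho}(t)\|_{L^2}\leq 2(B_0^2-\overline{\rho}_0^2)^{\frac12}$ on $[T^*,T^*+\tau_1]$. Consequently (\ref{eq:3.12}) and (\ref{eq:3.13}) hold on all of $[0,T^*+\tau_1]$, contradicting the maximality of $T^*$ in Assumption \ref{assm:3.3}. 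Hence $T^*=\infty$, and (\ref{eq:3.12}) gives $\|\rho(t,\cdot)\|_{L^\infty}\leq 2C_\infty$ for every $t\geq0$, which is the assertion.

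I expect the difficulty to lie not in the continuity scheme itself but in the inputs that feed it. Everything rests on Proposition \ref{prop:3.6} producing a strict improvement that is uniform in $A$ for $A\geq A_0$, which in turn depends on the mixing estimate for $\Phi(A)$ in Lemma \ref{lem:3.5} together with the simultaneous choice of $N$ and $\epsilon_0$ in Remark \ref{rem:10}. The crucial structural point I would stress is that the restart does not shrink the time step to zero, and this is precisely because $\tau_1$ is independent of $A$ (Remark \ref{rem:1}); a step length degenerating with $A$ would break the open/closed bootstrap and the conclusion $T^*=\infty$ would fail.
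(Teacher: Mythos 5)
Your proposal is correct and follows essentially the same route as the paper: establish nonemptiness and closedness of the admissible time set from the local estimates, invoke Proposition \ref{prop:3.6} at $T^*$ to recover the strict bounds $C_\infty$ and $(B_0^2-\overline{\rho}_0^2)^{1/2}$, restart the ($A$-independent) local theory to extend past $T^*$, and conclude $T^*=\infty$ by the open/closed bootstrap. Your write-up is in fact somewhat more careful than the paper's, in particular in checking that $\rho(T^*,\cdot)$ qualifies as admissible initial data and in flagging explicitly that the step length $\tau_1$ does not degenerate with $A$.
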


\begin{proof}
In fact, we only to prove that $T^\ast=\infty$. If $T^\ast<\infty$, according to local estimate and Assumption \ref{assm:3.3}, we know that the time set is nonempty close set for satisfying (\ref{eq:3.12}) and (\ref{eq:3.13}). And we deduce by Assumption \ref{assm:3.3}, Local estimate and Proposition \ref{prop:3.6} that for any $t\in [0,T^\ast+\tau_1]$, one has
$$
\|\rho(t,\cdot)\|_{L^\infty}\leq 2C_{\infty},\ \ \ \ \|\rho(t,\cdot)-\overline{\rho}\|_{L^2}\leq 2(B_0^2-\overline{\rho}_0^2)^{\frac{1}{2}}.
$$
Then $T^\ast$ is a inner point of time set for satisfying (\ref{eq:3.12}) and (\ref{eq:3.13}). Thus time set for satisfying (\ref{eq:3.12}) and (\ref{eq:3.13}) is a open set. Above all, we know that time set for satisfying (\ref{eq:3.12}) and (\ref{eq:3.13}) is a nonempty close and open subset in $\mathbb{R}^+$. this is impressible if $T^\ast<\infty$. Thus $T^\ast=\infty$. This completes the proof of Corollary \ref{cor:3.7}.
\end{proof}

\vskip .2in

\section{The $W^{3,\infty}$ estimate of solution}

In this section, we establish  the $W^{3,\infty}$ estimate of solution to Equation (\ref{eq:1.8}).

\begin{proposition}\label{prop:4.1}
Let $2\leq\beta<d, d> 2$, $\rho(t,x)$ is the solution of Equation (\ref{eq:1.8}) with initial data $\rho_0(x)$. If $u(x)$ is weakly mixing and the $\rho_0(x)$ satisfies (\ref{eq:3.1}) and (\ref{eq:3.2}). There exists a positive constant $A_0=A(\beta,\rho_0, d)$, if $A\geq A_0$,  then for the solution $\rho(t,x)$ of Equation (\ref{eq:1.8}) and any $T>0$, there exist a positive constant $C=C(A,\rho_0,T)<\infty$, such that
$$
\|\rho\|_{L^\infty (0, T; W^{3,\infty}(\mathbb{T}^d))}\leq C.
$$
\end{proposition}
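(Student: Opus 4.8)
The plan is to prove the $W^{3,\infty}$ bound by a standard energy argument for the transported-diffusive system, using crucially that the $L^\infty$ and $L^2$ bounds of $\rho$ are already globally controlled by Corollary~\ref{cor:3.7} and that the advection term contributes nothing to energy estimates. Since $A$ is now fixed, I would no longer try to exploit mixing; instead the point is that for fixed $A$ the equation is a transport-damping equation with a mildly singular nonlocal nonlinearity, and one simply propagates regularity on any finite interval $[0,T]$. First I would differentiate Equation~(\ref{eq:1.8}) up to third order in space. Writing $D^k\rho$ for a generic derivative of order $k=1,2,3$, applying $D^k$ and pairing with $|D^k\rho|^{p-2}D^k\rho$ in $L^p$ (then letting $p\to\infty$, or equivalently evaluating at a maximum point as in the proof of Lemma~\ref{lem:3.1}), the advection term $Au\cdot\nabla D^k\rho$ integrates to zero by $\mathrm{div}\,u=0$ and the damping term $D^k\rho$ gives a favorable sign. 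The only terms to control are the commutator $[D^k,Au\cdot\nabla]\rho$, which is linear in lower derivatives of $\rho$ with coefficients depending on $A$ and $\|u\|_{W^{k+1,\infty}}$, and the nonlinear term $D^k\nabla\cdot(\rho B(\rho))$.

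The heart of the estimate is the nonlinear term. Since $B(\rho)=\nabla K\ast\rho$ with $\nabla K\sim -x/|x|^\beta$ and $2\le\beta<d$, the operator $B$ gains essentially $d-\beta$ derivatives, so $\nabla\cdot(\rho B(\rho))=\rho\,\Delta K\ast\rho+B(\rho)\cdot\nabla\rho$ involves only a mild singularity. For the top-order term I would use the Leibniz rule to distribute the $k+1$ derivatives (one coming from the divergence): the worst contribution is $\rho\,D^k(\Delta K\ast\rho)$ and $B(\rho)\cdot\nabla D^k\rho$, together with commutator-type terms where derivatives fall on the kernel convolution. Each factor $D^j(\Delta K\ast\rho)$ can be written as $\Lambda^{2-\frac{\beta-d}{2}}K\ast\Lambda^{\frac{\beta-d}{2}}D^j\rho$, exactly the splitting used in (\ref{eq:3.47}); since $\|\Lambda^{2-\frac{\beta-d}{2}}K\|_{L^1}<\infty$ for $2\le\beta<d$, Young's inequality converts every convolution into a harmless $L^1$-kernel bound times a Sobolev norm of $\rho$. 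The net effect is that
\begin{equation}\label{eq:plan1}
\frac{d}{dt}\|\rho\|_{W^{k,\infty}}\leq C\bigl(A,\|u\|_{W^{k+1,\infty}},\|\rho\|_{L^\infty}\bigr)\,\|\rho\|_{W^{k,\infty}}+C\|\rho\|_{W^{k,\infty}}^2,
\end{equation}
for $k=1,2,3$, where I have used the already-established global bound $\|\rho\|_{L^\infty}\le 2C_\infty$ to absorb all lower-order factors.

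I would then close the argument by induction on $k$. For $k=1$, (\ref{eq:plan1}) together with the global $L^\infty$ bound gives a Riccati-type differential inequality $\tfrac{d}{dt}\|\rho\|_{W^{1,\infty}}\le C\|\rho\|_{W^{1,\infty}}+C\|\rho\|_{W^{1,\infty}}^2$; since the coefficients are finite and the initial data lies in $W^{3,\infty}$, Gronwall (or direct comparison with the solution of the associated ODE) yields a finite bound on $[0,T]$, with constant $C(A,\rho_0,T)$. Feeding the resulting $W^{1,\infty}$ bound back into the estimate for $k=2$, the quadratic term becomes linear in $\|\rho\|_{W^{2,\infty}}$ (one factor is now a known bounded quantity), giving a linear Gronwall inequality and hence a finite $W^{2,\infty}$ bound; repeating once more controls $W^{3,\infty}$. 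Because the kernel splitting keeps every singular convolution at the level of an $L^1$ kernel, no genuine loss of derivative occurs, and each stage is a clean linear-in-top-order Gronwall estimate.

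The main obstacle is bookkeeping the top-order nonlinear term so that it is genuinely controlled by $\|\rho\|_{W^{k,\infty}}$ rather than $\|\rho\|_{W^{k+1,\infty}}$, i.e.\ verifying that the apparent derivative loss in $D^k\nabla\cdot(\rho B(\rho))$ is compensated by the smoothing of $B$. Concretely, the term $B(\rho)\cdot\nabla D^k\rho$ naively needs $k+1$ derivatives of $\rho$; this is handled exactly as the advection term, by pairing against $|D^k\rho|^{p-2}D^k\rho$ and integrating by parts so that the $\mathrm{div}\,B(\rho)=\Delta K\ast\rho$ factor (bounded in $L^\infty$ by the $L^\infty$ bound on $\rho$ and $\|\Delta K\|_{L^1}$) carries the half-derivative, never costing a top-order norm of $\rho$. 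Once this cancellation is made explicit the estimate is routine, and the fact that $A$ and $u$ are now fixed means all their contributions are simply finite constants, so the mixing machinery of Section~3 plays no role here beyond furnishing the $L^\infty$ and $L^2$ bounds.
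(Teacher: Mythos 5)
Your overall strategy coincides with the paper's: differentiate the equation, evaluate at a maximum point (the $p\to\infty$ limit of the $L^p$ pairing), kill the transport terms $Au\cdot\nabla D^k\rho$ and $B(\rho)\cdot\nabla D^k\rho$ by the vanishing of the gradient at the maximum, bound every convolution by $\|\Delta K\|_{L^1}\|\cdot\|_{L^\infty}$ (finite since $\beta<d$), and Gronwall on $[0,T]$ with constants depending on $A$. However, the way you close the case $k=1$ contains a genuine logical gap. You assert the Riccati-type inequality $\tfrac{d}{dt}\|\rho\|_{W^{1,\infty}}\le C\|\rho\|_{W^{1,\infty}}+C\|\rho\|_{W^{1,\infty}}^2$ and then claim that ``Gronwall (or direct comparison with the solution of the associated ODE) yields a finite bound on $[0,T]$.'' This is false as stated: the comparison ODE $\dot z=Cz+Cz^2$ blows up in finite time for any positive data, so a quadratic differential inequality only controls the solution up to a time of order $1/(C\|\rho_0\|_{W^{1,\infty}})$, not on an arbitrary interval $[0,T]$. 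If the first-order estimate really were quadratic in the top norm, the induction could not start.

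The repair is exactly the observation you already make for $k=2,3$ but fail to apply at $k=1$: every term produced by one derivative of $\nabla\cdot(\rho B(\rho))$ and paired with $D\rho$ at the maximum point has the form (two factors of $D\rho$) $\times$ (a convolution bounded by $\|\Delta K\|_{L^1}\|\rho\|_{L^\infty}$), e.g. $\nabla\rho\cdot\nabla DK\ast\rho\,D\rho$, $D\rho\,\Delta K\ast\rho\,D\rho$, $\rho\,\Delta K\ast D\rho\,D\rho$. After invoking the global bound $\|\rho\|_{L^\infty}\le 2C_\infty$ from Corollary \ref{cor:3.7}, each is $\le CC_\infty|\widetilde{D\rho}|^2$, so the inequality for $y=|\widetilde{D\rho}|^2$ is \emph{linear}, $\tfrac{d}{dt}y\le (CA+CC_\infty)y$, and yields $\|D\rho(t)\|_{L^\infty}\le e^{CAt}\|D\rho_0\|_{L^\infty}$ on all of $[0,T]$; this is precisely the paper's (\ref{eq:4.8})--(\ref{eq:4.10}). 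With that correction your induction on $k$ goes through as you describe, and the rest of your treatment (the $\Lambda^{2-\frac{\beta-d}{2}}K$ splitting is not even needed here, since $\|\Delta K\|_{L^1}<\infty$ suffices) is consistent with the paper.
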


\begin{proof}
First, we establish the estimate of $\|D\rho\|_{L^\infty}$. Applying the operator $D$ to the Equation (\ref{eq:1.8}), one has
\begin{equation}\label{eq:4.1}
\begin{aligned}
\partial_tD\rho+& Au\cdot\nabla(D\rho)+ADu\cdot\nabla\rho+D\rho
+\nabla(D\rho)\cdot\nabla K \ast \rho\\
&+\nabla\rho\cdot\nabla DK \ast \rho+D\rho\Delta K\ast \rho+\rho\Delta K \ast D\rho=0,
\end{aligned}
\end{equation}
and multiplying both sides of (\ref{eq:4.1}) by $D\rho$, to obtain
\begin{equation}\label{eq:4.2}
\begin{aligned}
\frac{1}{2}\partial_t|D\rho|^2&+\frac{1}{2}Au\cdot\nabla(|D\rho|^2)+ADu\cdot\nabla \rho D\rho
+|D\rho|^2\\
&+\frac{1}{2}\nabla(|D\rho|^2)\cdot\nabla K \ast \rho+\nabla\rho\cdot\nabla D K \ast \rho D\rho\\
&+D\rho \Delta K\ast \rho D\rho+\rho\Delta K \ast D\rho D\rho=0.
\end{aligned}
\end{equation}
The third term of the left-hand side of (\ref{eq:4.2}) can be estimated as
\begin{equation}\label{eq:4.3}
\big|A Du\cdot\nabla \rho D\rho\big|\leq A\|Du\|_{L^\infty}\|D\rho\|^2_{L^\infty}\leq CA\|D\rho\|^2_{L^\infty}.
\end{equation}
For the sixth term, seventh term and eighth term of the left-hand side of (\ref{eq:4.2}), one has
\begin{equation}\label{eq:4.4}
\big|\nabla\rho\cdot\nabla D K \ast \rho D\rho\big|\leq C\|D\rho\|^2_{L^\infty}\|\Delta K\|_{L^1}\|\rho \|_{L^\infty},
\end{equation}
\begin{equation}\label{eq:4.5}
\big|D\rho \Delta K\ast \rho D\rho\big|\leq C\|D\rho\|^2_{L^\infty}\|\Delta K\|_{L^1}\|\rho \|_{L^\infty},
\end{equation}
and
\begin{equation}\label{eq:4.6}
\big|\rho\Delta K \ast D\rho D\rho\big|\leq C\|D\rho\|^2_{L^\infty}\|\Delta K\|_{L^1}\|\rho \|_{L^\infty}.
\end{equation}
If define
$$
\widetilde{|D\rho|}=|D\rho|(t,\overline{x}_{1,t})=\max_{x\in \mathbb{T}^d}|D\rho|(t,x).
$$
Using the vanishing of the derivation at the point maximum, the second term and fifth term of the left-hand side of (\ref{eq:4.2}) can be estimated as
\begin{equation}\label{eq:4.7}
\frac{1}{2}Au\cdot\nabla(|D\rho|^2)(t,\overline{x}_{1,t})=0,\ \ \
\frac{1}{2}\nabla(|D\rho|^2)\cdot\nabla K \ast \rho(t,\overline{x}_{1,t})=0.
\end{equation}
Combining Corollary \ref{cor:3.7}, (\ref{eq:4.2})-(\ref{eq:4.7}) and $2\leq \beta<d$, we deduce that the evolution of $|\widetilde{D\rho}|$ follows
\begin{equation}\label{eq:4.8}
\frac{d}{dt}|\widetilde{D\rho}|^2\leq -2|\widetilde{D\rho}|^2+CA|\widetilde{D\rho}|^2+CC_\infty|\widetilde{D\rho}|^2.
\end{equation}
If $A$ is large enough, the (\ref{eq:4.8}) can be written as
\begin{equation}\label{eq:4.9}
\frac{d}{dt}|\widetilde{D\rho}|^2\leq CA|\widetilde{D\rho}|^2.
\end{equation}
Then we deduce by (\ref{eq:4.9}) that for any $t\geq 0$, one has
$$
|\widetilde{D\rho}(t)|^2\leq e^{CAt}|\widetilde{D\rho_0}|^2.
$$
Namely,
\begin{equation}\label{eq:4.10}
\|D\rho(t, \cdot)\|_{L^\infty}\leq e^{CAt}\|D\rho_0\|_{L^\infty}.
\end{equation}
By the same argument with (\ref{eq:4.10}), we deduce that for any $T>0$, there exist a positive constant $C=C(A,T,\rho_0)$, such that
$$
\|D^2\rho(t, \cdot)\|_{L^\infty}\leq C,\ \ \ \ \ \|D^3\rho(t, \cdot)\|_{L^\infty}\leq C.
$$
This completes the proof of Proposition \ref{prop:4.1}.
\end{proof}

\vskip .2in

\appendix
\section{The proof of Lemma \ref{lem:2.3}}

we introduce enhanced dissipation effect of mixing via resolvent estimate (see \cite{Wei.2021}).
Let $\mathcal{H}$ be a Hilbert space, we denote by $\|\cdot\|$ the norm and by $\langle,\rangle$ the inner product. Let $H$ be a linear operator in $\mathcal{H}$ with the domain $D(H)$, it is defined as follows (see \cite{Kato.1995, Wei.2021})

\begin{define}\label{def:A.1}
A closed operator $H$ in a Hilbert space $\mathcal{H}$ is called m-accretive if the left open half-plane is contained in the resolvent set $\rho(H)$ with
$$
(H+\lambda I)^{-1}\in \mathcal{B}(X),\ \ \  \|(H+\lambda I)^{-1}\|\leq (Re \lambda)^{-1},\ \ \  Re \lambda>0,
$$
where $X$ is defined  in (\ref{eq:2.1}), $\mathcal{B}(X)$ denotes the set of bounded linear operators on $X$ with operator $\|\cdot\|$ and $I$ is the identity operator.
\end{define}

\begin{remark}\label{rem:11}
An m-accretive operator $H$ is  accretive and densely defined 
 namely, $D(H)$ is dense in $X$ and $Re \langle Hf,f\rangle\geq0$ for $f\in D(H)$.
\end{remark}

We denote $e^{-tH}$ is a  semigroup with $-H$ as generator and define
\begin{equation}\label{eq:A.1}
\Psi(H)=\inf\{\|(H-i\lambda)f\|: f\in D(H), \lambda\in \mathbb{R}, \|f\|=1 \}.
\end{equation}

The following result is the Gearchart-Pr\"{u}ss type theorem for accretive operators (see \cite{Bedrossian.201701,Wei.2021}).

\begin{lemma}\label{lem:A.2}
Let $H$ be an m-accretive operator in a Hilbert space $\mathcal{H}$. Then for any $t\geq 0$, we have
$$
\|e^{-tH}\|_{L^2\rightarrow L^2}\leq e^{-t\Psi(H)+\frac{\pi}{2}}.
$$
\end{lemma}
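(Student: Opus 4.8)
The plan is to prove this as a quantitative Gearhart--Pr\"{u}ss estimate, in the spirit of the Helffer--Sj\"{o}strand semigroup method, converting the lower bound $\Psi(H)$ into exponential decay of the contraction semigroup $e^{-tH}$. We may assume $\Psi:=\Psi(H)>0$, since otherwise the right-hand side is at least $e^{\pi/2}>1$ while $\|e^{-tH}\|\le 1$: accretivity (Remark \ref{rem:11}) gives $\tfrac{d}{dt}\|e^{-tH}f\|^2=-2\,\mathrm{Re}\langle He^{-tH}f,e^{-tH}f\rangle\le 0$, so $t\mapsto\|e^{-tH}f\|$ is non-increasing and bounded by $\|f\|$. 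Fix $f\in D(H)$ with $\|f\|=1$, write $u(t)=e^{-tH}f$ and $a(t)=\|u(t)\|^2$. I would first record the resolvent bound on the imaginary axis: for every $\tau\in\mathbb{R}$ the operator $H+i\tau$ is again m-accretive, and by the definition \eqref{eq:A.1} of $\Psi$ (with $\lambda=-\tau$) it satisfies $\|(H+i\tau)g\|\ge\Psi\|g\|$; hence it is boundedly invertible with $\|(H+i\tau)^{-1}\|\le\Psi^{-1}$ for all real $\tau$.

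The central step is a Plancherel identity in time. Given $\chi\in C_c^\infty((0,\infty))$, set $v(t)=\chi(t)u(t)$, so that $(\partial_t+H)v=\chi'(t)u(t)$ on $\mathbb{R}$, with $v$ compactly supported. Taking the Fourier transform in $t$ gives $(i\tau+H)\widehat v(\tau)=\widehat{\chi'u}(\tau)$, whence $\widehat v(\tau)=(H+i\tau)^{-1}\widehat{\chi'u}(\tau)$; combining the uniform resolvent bound with Plancherel then yields the functional inequality
\[
\int_0^\infty |\chi(t)|^2 a(t)\,dt \le \frac{1}{\Psi^2}\int_0^\infty |\chi'(t)|^2 a(t)\,dt .
\]

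Finally I would convert this averaged inequality into a pointwise estimate. Substituting $\chi(t)=e^{\Psi t}\phi(t)$ absorbs the expected decay rate and, after integrating the cross term by parts, reduces the estimate to a one-dimensional spectral-gap form for $\phi$ weighted by the monotone function $a$. Choosing $\phi$ to be the extremal half-period sine on a window of length $\pi/\Psi$ and exploiting the monotonicity $a(t_1)\le a(t)\le a(t_0)$ on that window produces the rate $e^{-2\Psi t}$ for $a$ together with the sharp transient constant $e^{\pi}$; taking the supremum over $\|f\|=1$ and square roots then gives $\|e^{-tH}\|_{L^2\to L^2}\le e^{\pi/2-\Psi t}$. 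The main obstacle is precisely this last step: the single averaged inequality must be upgraded to a sharp pointwise decay statement, and it is in the optimization of the test function $\chi$ (equivalently, in the extremal profile for the induced Sturm--Liouville form) that the precise constant $\pi/2$ emerges; obtaining this constant rather than the crude geometric-decay rate that a fixed finite window would give is the delicate part of the argument.
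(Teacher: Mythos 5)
First, a point of reference: the paper does not actually prove Lemma \ref{lem:A.2}; it is quoted verbatim from the cited work of Wei (and Bedrossian--Coti Zelati), so there is no in-paper argument to compare against. Your strategy is nevertheless the right one --- it is exactly the route of the cited reference: reduce to $\Psi(H)>0$ via contractivity, observe $\|(H+i\tau)g\|\geq \Psi\|g\|$ for all real $\tau$ directly from the definition (\ref{eq:A.1}) (note you do not even need bounded invertibility of $H+i\tau$, only this lower bound applied to $\widehat v(\tau)\in D(H)$), and then use the time-Fourier/Plancherel identity to obtain $\int\chi^2\|u\|^2\,dt\leq \Psi^{-2}\int|\chi'|^2\|u\|^2\,dt$ for $\chi\in C_c^\infty((0,\infty))$. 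These three steps are correct and essentially complete; the requirement $\chi(0)=0$ that you impose is genuinely needed, since otherwise a Dirac term $\chi(0)f\,\delta_0$ appears and destroys the Plancherel bound.

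The gap is the final step, and it is not a technicality: all of the content of the lemma beyond $\|e^{-tH}\|\leq 1$ lives there. You describe it impressionistically ("extremal half-period sine on a window of length $\pi/\Psi$", "integrating the cross term by parts") and explicitly defer it as "the delicate part", but no actual argument is given, and the recipe as stated does not obviously work. Two concrete issues. (i) The relevant boundary condition is Dirichlet at $t=0$ only (forced by the Duhamel/jump term), so the critical Poincar\'e profile on a window $[0,L]$ is the quarter-period sine $\sin(\pi t/(2L))$ with constant $(2L/\pi)^2$, not the half-period sine; the critical window is therefore of length $\pi/(2\Psi)$, and this is precisely where the transient $e^{\pi/2}$ (equivalently $e^{\pi}$ for $\|u\|^2$) comes from --- the extremal $a(t)=\|u(t)\|^2$ can plateau for a time $\pi/(2\Psi)$ before decaying. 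Your stated window and profile are off by a factor of $2$, which suggests the optimization has not actually been carried out. (ii) After substituting $\chi=e^{\Psi t}\phi$, the cross term $2\Psi\int e^{2\Psi t}\phi\phi'a\,dt$ can no longer be integrated away because $a$ is only monotone, not constant; the resulting inequality mixes $a$ and its (negative) distributional derivative and does not reduce to a clean one-dimensional spectral-gap statement by the manipulation you indicate. A cruder execution (e.g.\ extracting polynomial decay on a fixed window and iterating via the semigroup property) does close, but yields a rate strictly worse than $\Psi$, so the sharp rate and the constant $e^{\pi/2}$ genuinely require the weighted test-function argument to be completed in detail. Until that step is written out, the proposal is a correct plan rather than a proof.
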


We consider advection diffusion equation

\begin{equation}\label{eq:A.2}
\partial_t\eta-\Delta\eta+Au\cdot \nabla \eta=0,\ \ \  \eta(0,x)=\rho_0(x), \ \ x\in \mathbb{T}^d, \ \ t\geq0,
\end{equation}
where $u=u(x)$ is weakly mixing. Let us denote

\begin{equation}\label{eq:A.3}
H_A=-\Delta+Au\cdot \nabla,
\end{equation}
where
$$
D(H_A)=H^{2}\cap X.
$$
If $\rho_0\in D(H_A)$, then the solution of  Equation (\ref{eq:A.2}) can be expressed by semigroup method, namely
$$
\eta(t,x)=e^{-tH_A}\rho_0(x).
$$

\begin{remark}\label{rem:12}
According to the Definition \ref{def:A.1}, the $H_A$ in (\ref{eq:A.3}) is a m-accretive operator.
\end{remark}

If we define the $\Psi(H_A)$ by (\ref{eq:A.1}) and (\ref{eq:A.3}), the similar result with  Lemma \ref{lem:A.2} is as follows
\begin{lemma}\label{lem:A.3}
Let $H_A$ be defined in (\ref{eq:A.3}), then for any $t\geq 0$, we have
$$
\|e^{-tH_A}\|_{L^2\rightarrow L^2}\leq e^{-t\Psi(H_A)+\frac{\pi}{2}}.
$$
\end{lemma}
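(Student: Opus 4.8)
The plan is to deduce Lemma~\ref{lem:A.3} directly from the abstract Gearhart--Pr\"uss type estimate in Lemma~\ref{lem:A.2}, the only thing to verify being that the operator $H_A=-\Delta+Au\cdot\nabla$ defined in (\ref{eq:A.3}) is m-accretive on the mean-zero space $X$. This is exactly the content of Remark~\ref{rem:12}, so once m-accretivity is in hand the conclusion is immediate: applying Lemma~\ref{lem:A.2} with $H=H_A$ gives $\|e^{-tH_A}\|_{L^2\to L^2}\le e^{-t\Psi(H_A)+\frac{\pi}{2}}$ for all $t\ge 0$.

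First I would establish accretivity. For $f\in D(H_A)=H^2\cap X$ integration by parts yields $\langle H_A f,f\rangle=\|\nabla f\|_{L^2}^2+A\langle u\cdot\nabla f,f\rangle$. Since $u$ is divergence-free, the advection term is skew-symmetric, $\langle u\cdot\nabla f,f\rangle=-\langle f,u\cdot\nabla f\rangle$, hence purely imaginary, so that $\mathrm{Re}\,\langle H_A f,f\rangle=\|\nabla f\|_{L^2}^2\ge 0$. Thus $H_A$ is accretive; moreover on $X$ the gradient term is coercive, since by the Poincar\'e inequality $\|\nabla f\|_{L^2}^2\ge \lambda_1\|f\|_{L^2}^2$ for mean-zero $f$.

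Next I would verify the resolvent condition of Definition~\ref{def:A.1}. For $\mathrm{Re}\,\lambda>0$ and $g=(H_A+\lambda I)f$, accretivity gives $\mathrm{Re}\,\langle g,f\rangle=\mathrm{Re}\,\langle H_A f,f\rangle+\mathrm{Re}\,\lambda\,\|f\|_{L^2}^2\ge \mathrm{Re}\,\lambda\,\|f\|_{L^2}^2$, so by Cauchy--Schwarz $\|f\|_{L^2}\le(\mathrm{Re}\,\lambda)^{-1}\|g\|_{L^2}$, which is precisely the bound $\|(H_A+\lambda I)^{-1}\|\le(\mathrm{Re}\,\lambda)^{-1}$ once surjectivity is known. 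To place the left half-plane in the resolvent set I would treat $Au\cdot\nabla$ as a perturbation of $-\Delta$ that is relatively bounded with relative bound zero, using $\|u\cdot\nabla f\|_{L^2}\le C\|f\|_{H^1}\le \varepsilon\|\Delta f\|_{L^2}+C_\varepsilon\|f\|_{L^2}$, so that $H_A$ is closed on $D(H_A)$ and $H_A+\lambda I$ is invertible for $\mathrm{Re}\,\lambda>0$; alternatively the Lax--Milgram theorem applied to the sesquilinear form associated with $H_A+\lambda I$ furnishes solvability directly.

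The estimate then follows by quoting Lemma~\ref{lem:A.2}. I expect the only genuine subtlety to be bookkeeping on the space $X$: one must check that $e^{-tH_A}$ preserves the mean-zero constraint (which it does, as constants are annihilated by both $-\Delta$ and by the divergence-free advection), and that $\Psi(H_A)$ in (\ref{eq:A.1}) is computed with the infimum taken over $D(H_A)\subset X$. These points are routine once accretivity and the resolvent bound are secured, so the main---and essentially only---obstacle is the m-accretivity verification, every remaining step being a direct invocation of Lemma~\ref{lem:A.2}.
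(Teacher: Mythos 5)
Your proposal is correct and follows the same route the paper intends: the paper simply notes in Remark \ref{rem:12} that $H_A$ is m-accretive and then invokes the Gearhart--Pr\"uss type bound of Lemma \ref{lem:A.2} (deferring details to the cited reference), which is exactly your argument. You merely fill in the m-accretivity verification (skew-symmetry of $Au\cdot\nabla$, the resolvent bound, and surjectivity) that the paper leaves implicit.
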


If $u$ is weakly mixing, the estimate of $\Psi(H_A)$ is as follows

\begin{lemma}\label{lem:A.4}
Let $u$ is weakly mixing, then
$$
\lim_{A\rightarrow +\infty}\Psi(H_A)=+\infty,
$$
where $\Psi(H_A)$ be defined by (\ref{eq:A.1}) and (\ref{eq:A.3}).
\end{lemma}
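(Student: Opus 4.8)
The plan is to prove the statement by contradiction, combining an elementary lower bound coming from the accretivity of $H_A$ with a compactness argument that converts a failure of the conclusion into a nontrivial eigenfunction of the advection operator $u\cdot\nabla$, which weak mixing forbids.

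First I would record the basic energy identity. Since $u$ is smooth and divergence-free, $u\cdot\nabla$ is skew-adjoint on $X$, so for any $f\in D(H_A)$ with $\|f\|=1$ and any $\lambda\in\mathbb{R}$ the quantities $\langle Au\cdot\nabla f, f\rangle$ and $\langle i\lambda f, f\rangle$ are purely imaginary, whence $\mathrm{Re}\,\langle (H_A-i\lambda)f, f\rangle = \|\nabla f\|^2$. By Cauchy--Schwarz this gives $\|(H_A-i\lambda)f\| \geq \|\nabla f\|^2 \geq \lambda_1 = 1$, and, more importantly, it shows that any near-minimizer of $\Psi(H_A)$ must have controlled $H^1$ norm. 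Concretely, suppose the claim fails: then there exist $M_0 > 0$ and $A_n\to\infty$ with $\Psi(H_{A_n})\leq M_0$, and by definition of the infimum one may pick $f_n\in D(H_{A_n})$ with $\|f_n\|=1$ and $\lambda_n\in\mathbb{R}$ such that $g_n := (H_{A_n}-i\lambda_n)f_n$ satisfies $\|g_n\|\leq M_0+1$. The real-part identity then yields $\|\nabla f_n\|^2 = \mathrm{Re}\,\langle g_n, f_n\rangle\leq M_0+1$, so $\{f_n\}$ is bounded in $H^1(\mathbb{T}^d)$.

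Next I would control the frequency $\lambda_n$ after rescaling by $A_n$. Writing $\langle u\cdot\nabla f_n, f_n\rangle = i\mu_n$ with $\mu_n\in\mathbb{R}$, skew-adjointness and the $H^1$ bound give $|\mu_n|\leq \|u\|_{L^\infty}\|\nabla f_n\|\leq C$, while the imaginary part of $\langle g_n, f_n\rangle$ equals $A_n\mu_n-\lambda_n$ and is bounded by $M_0+1$ in absolute value; hence $\lambda_n/A_n = \mu_n + O(A_n^{-1})$ stays bounded. Passing to a subsequence, $\lambda_n/A_n\to\mu_\infty\in\mathbb{R}$, and, using the compact embedding $H^1(\mathbb{T}^d)\hookrightarrow L^2(\mathbb{T}^d)$, $f_n\rightharpoonup f_\infty$ weakly in $H^1$ and $f_n\to f_\infty$ strongly in $L^2$; strong convergence forces $\|f_\infty\|=1$, and since $X$ is closed, $f_\infty\in X$. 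Dividing the identity $g_n=-\Delta f_n + A_n u\cdot\nabla f_n - i\lambda_n f_n$ by $A_n$ and letting $n\to\infty$ in $H^{-1}(\mathbb{T}^d)$ annihilates the terms $A_n^{-1}(-\Delta f_n)$ and $A_n^{-1}g_n$ (bounded in $H^{-1}$ and $L^2$ respectively), and because $u$ is smooth and divergence-free one has $u\cdot\nabla f_n=\nabla\cdot(uf_n)\to u\cdot\nabla f_\infty$ in $H^{-1}$; the limit equation reads $u\cdot\nabla f_\infty = i\mu_\infty f_\infty$.

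Finally I would close the argument. The relation $u\cdot\nabla f_\infty = i\mu_\infty f_\infty$ with $f_\infty\in X$, $\|f_\infty\|=1$, exhibits $f_\infty$ as an eigenfunction of the generator of the unitary group $U^t$, so that $U^t f_\infty = e^{i\mu_\infty t}f_\infty$; in particular $U=U^t$ has a point eigenvalue on $X$. This contradicts Definition \ref{def:2.1}, under which weak mixing requires the spectrum of $U$ on $X$ to be purely continuous. Therefore $\Psi(H_{A})\to+\infty$ as $A\to+\infty$. The main obstacle in this scheme is the limit passage in the advection term: one must handle $A_n u\cdot\nabla f_n$, a product of a diverging constant with a factor only bounded in $H^{-1}$, by first dividing through by $A_n$ and working in the negative Sobolev topology, and one must verify carefully that weak mixing, defined through the unitary Koopman operator $U^t$, precisely rules out the eigenfunction produced by the compactness limit.
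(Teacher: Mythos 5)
Your argument is correct and is essentially the one the paper relies on: the paper itself omits the proof of this lemma (Remark \ref{rem:13} defers it to \cite{Wei.2021}), and your contradiction--compactness scheme --- the accretivity identity $\mathrm{Re}\,\langle (H_A-i\lambda)f,f\rangle=\|\nabla f\|^2$ giving an $H^1$ bound on near-minimizers, the rescaling $\lambda_n/A_n\to\mu_\infty$, the passage to the limit in $H^{-1}$ after dividing by $A_n$, and the resulting $H^1$ eigenfunction $u\cdot\nabla f_\infty=i\mu_\infty f_\infty$ contradicting the purely continuous spectrum of $U^t$ --- is precisely the resolvent-estimate characterization proved there. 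No gaps; this supplies the details the paper leaves to the reference.
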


\begin{remark}\label{rem:13}
The details of Lemma \ref{lem:A.3} and Lemma \ref{lem:A.3} can see \cite{Wei.2021}.
\end{remark}

\begin{proof}[The proof of Lemma \ref{lem:2.3}]
Combining Lemma \ref{lem:A.3} and Lemma \ref{lem:A.4}, we deduce that for $A$ is large enough and any $t\geq 0$, there exist a positive constant $C>1$, such that for any $f\in \mathcal{S}(\mathbb{T}^d)$, one has
\begin{equation}\label{eq:A.4}
\|P_Ne^{-tH_A}f\|_{L^2}\leq C\|P_Nf\|_{L^2}.
\end{equation}
Since
\begin{equation}\label{eq:A.5}
\|P_Ne^{-tH_A}f\|^2_{L^2}=\sum_{|k|\leq N}e^{-2\lambda_k t}\left|\widehat{e^{-tAu\cdot \nabla}f}(k) \right|^2,
\end{equation}
then we deduce by (\ref{eq:A.4}) and (\ref{eq:A.5}) that
$$
\|P_Ne^{-tAu\cdot \nabla}f\|^2_{L^2}=\sum_{|k|\leq N}\left|\widehat{e^{-tAu\cdot \nabla}f}(k) \right|^2\leq Ce^{2\lambda_N t}\|P_Nf\|_{L^2}.
$$
Thus
$$
\|P_Ne^{-tAu\cdot \nabla}f\|_{L^2}\leq Ce^{N^2 t}\|P_Nf\|_{L^2}.
$$
This completes the proof of Lemma \ref{lem:2.3}.
\end{proof}

\vskip .1in
\noindent \textbf{Acknowledgement.} The work of the first author was partially supported by the National Natural Science Foundation of China (Grant No. 11771284) and the Natural Science Foundation Project of Shanghai 2021 "science and technology innovation action plan" (Grant No. 21JC1403600). The work of the second author was partially supported by the National Natural Science Foundation of China (Grant No. 11771284, Grant No. 11831011) and the Natural Science Foundation Project of Shanghai 2021 "science and technology innovation action plan" (Grant No. 21JC1403600).



\bibliographystyle{abbrv}
\bibliography{bib}

\begin{thebibliography}{10}

\bibitem{Ascasibar.2013}
Y.~Ascasibar, R.~Granero-Belinch\'{o}n, and J.~M. Moreno.
\newblock An approximate treatment of gravitational collapse.
\newblock {\em Phys. D}, 262:71--82, 2013.

\bibitem{Bedrossian.201701}
J.~Bedrossian and M.~Coti~Zelati.
\newblock Enhanced dissipation, hypoellipticity, and anomalous small noise
  inviscid limits in shear flows.
\newblock {\em Arch. Ration. Mech. Anal.}, 224(3):1161--1204, 2017.

\bibitem{Bedrossian.2017}
J.~Bedrossian and S.~He.
\newblock Suppression of blow-up in {P}atlak-{K}eller-{S}egel via shear flows.
\newblock {\em SIAM J. Math. Anal.}, 49(6):4722--4766, 2017.

\bibitem{Bedrossian.201501}
J.~Bedrossian and N.~Masmoudi.
\newblock Inviscid damping and the asymptotic stability of planar shear flows
  in the 2{D} {E}uler equations.
\newblock {\em Publ. Math. Inst. Hautes \'{E}tudes Sci.}, 122:195--300, 2015.

\bibitem{Bedrossian.201602}
J.~Bedrossian, N.~Masmoudi, and V.~Vicol.
\newblock Enhanced dissipation and inviscid damping in the inviscid limit of
  the {N}avier-{S}tokes equations near the two dimensional {C}ouette flow.
\newblock {\em Arch. Ration. Mech. Anal.}, 219(3):1087--1159, 2016.

\bibitem{Biler.2010}
P.~Biler and G.~Karch.
\newblock Blowup of solutions to generalized {K}eller-{S}egel model.
\newblock {\em J. Evol. Equ.}, 10(2):247--262, 2010.

\bibitem{Blanchet.2006}
A.~Blanchet, J.~Dolbeault, and B.~Perthame.
\newblock Two-dimensional {K}eller-{S}egel model: optimal critical mass and
  qualitative properties of the solutions.
\newblock {\em Electron. J. Differential Equations}, pages No. 44, 32, 2006.

\bibitem{Chen.2016}
J.~Chen, Y.~Li, and W.~Wang.
\newblock Global classical solutions to the {C}auchy problem of conservation
  laws with degenerate diffusion.
\newblock {\em J. Differential Equations}, 260(5):4657--4682, 2016.

\bibitem{Constantin.2008}
P.~Constantin, A.~Kiselev, L.~Ryzhik, and A.~Zlato\v{s}.
\newblock Diffusion and mixing in fluid flow.
\newblock {\em Ann. of Math. (2)}, 168(2):643--674, 2008.

\bibitem{Corrias.2004}
L.~Corrias, B.~Perthame, and H.~Zaag.
\newblock Global solutions of some chemotaxis and angiogenesis systems in high
  space dimensions.
\newblock {\em Milan J. Math.}, 72:1--28, 2004.

\bibitem{Michele.2020}
M.~Coti~Zelati, T.~M. Elgindi, and K.~Widmayer.
\newblock Enhanced dissipation in the {N}avier-{S}tokes equations near the
  {P}oiseuille flow.
\newblock {\em Comm. Math. Phys.}, 378(2):987--1010, 2020.

\bibitem{Cycon.1987}
H.~L. Cycon, R.~G. Froese, W.~Kirsch, and B.~Simon.
\newblock {\em Schr\"{o}dinger operators with application to quantum mechanics
  and global geometry}.
\newblock Texts and Monographs in Physics. Springer-Verlag, Berlin, study
  edition, 1987.

\bibitem{Evans.2010}
L.~C. Evans.
\newblock {\em Partial differential equations}, volume~19 of {\em Graduate
  Studies in Mathematics}.
\newblock American Mathematical Society, Providence, RI, second edition, 2010.

\bibitem{Fayad.2006}
B.~Fayad.
\newblock Smooth mixing flows with purely singular spectra.
\newblock {\em Duke Math. J.}, 132(2):371--391, 2006.

\bibitem{Fayad.2002}
B.~R. Fayad.
\newblock Weak mixing for reparameterized linear flows on the torus.
\newblock {\em Ergodic Theory Dynam. Systems}, 22(1):187--201, 2002.

\bibitem{Friedman.1969}
A.~Friedman.
\newblock {\em Partial differential equations}.
\newblock Holt, Rinehart and Winston, Inc., New York-Montreal, Que.-London,
  1969.

\bibitem{Hopf.2018}
K.~Hopf and J.~L. Rodrigo.
\newblock Aggregation equations with fractional diffusion: preventing
  concentration by mixing.
\newblock {\em Commun. Math. Sci.}, 16(2):333--361, 2018.

\bibitem{Ionescu.2020}
A.~D. Ionescu and H.~Jia.
\newblock Inviscid damping near the {C}ouette flow in a channel.
\newblock {\em Comm. Math. Phys.}, 374(3):2015--2096, 2020.

\bibitem{Jia.2020}
H.~Jia.
\newblock Linear inviscid damping in {G}evrey spaces.
\newblock {\em Arch. Ration. Mech. Anal.}, 235(2):1327--1355, 2020.

\bibitem{Kato.1995}
T.~Kato.
\newblock {\em Perturbation theory for linear operators}.
\newblock Classics in Mathematics. Springer-Verlag, Berlin, 1995.
\newblock Reprint of the 1980 edition.

\bibitem{Kiselev.2016}
A.~Kiselev and X.~Xu.
\newblock Suppression of chemotactic explosion by mixing.
\newblock {\em Arch. Ration. Mech. Anal.}, 222(2):1077--1112, 2016.

\bibitem{Lafleche.2019}
L.~Lafleche and S.~Salem.
\newblock Fractional {K}eller-{S}egel equation: global well-posedness and
  finite time blow-up.
\newblock {\em Commun. Math. Sci.}, 17(8):2055--2087, 2019.

\bibitem{Li.2010}
D.~Li, J.~L. Rodrigo, and X.~Zhang.
\newblock Exploding solutions for a nonlocal quadratic evolution problem.
\newblock {\em Rev. Mat. Iberoam.}, 26(1):295--332, 2010.

\bibitem{Lin.2019}
Z.~Lin and M.~Xu.
\newblock Metastability of {K}olmogorov flows and inviscid damping of shear
  flows.
\newblock {\em Arch. Ration. Mech. Anal.}, 231(3):1811--1852, 2019.

\bibitem{Zeng.2011}
Z.~Lin and C.~Zeng.
\newblock Inviscid dynamical structures near {C}ouette flow.
\newblock {\em Arch. Ration. Mech. Anal.}, 200(3):1075--1097, 2011.

\bibitem{Masmoudi.2020}
N.~Masmoudi and W.~Zhao.
\newblock Enhanced dissipation for the 2{D} {C}ouette flow in critical space.
\newblock {\em Comm. Partial Differential Equations}, 45(12):1682--1701, 2020.

\bibitem{Nagai.1995}
T.~Nagai.
\newblock Blow-up of radially symmetric solutions to a chemotaxis system.
\newblock {\em Adv. Math. Sci. Appl.}, 5(2):581--601, 1995.

\bibitem{Senba.2002}
T.~Senba and T.~Suzuki.
\newblock Weak solutions to a parabolic-elliptic system of chemotaxis.
\newblock {\em J. Funct. Anal.}, 191(1):17--51, 2002.

\bibitem{Shi.2021}
B.~Shi.
\newblock Suppression of blow up by mixing in generalized keller-segel system
  with fractional dissipation and strong singular kernel.
\newblock {\em Submitted arXiv:2103.04484}.

\bibitem{Shi.2019}
B.~Shi and W.~Wang.
\newblock Suppression of blow up by mixing in generalized {K}eller-{S}egel
  system with fractional dissipation.
\newblock {\em Commun. Math. Sci.}, 18(5):1413--1440, 2020.

\bibitem{Tao.2006}
T.~Tao.
\newblock {\em Nonlinear dispersive equations}, volume 106 of {\em CBMS
  Regional Conference Series in Mathematics}.
\newblock Published for the Conference Board of the Mathematical Sciences,
  Washington, DC; by the American Mathematical Society, Providence, RI, 2006.
\newblock Local and global analysis.

\bibitem{Wang.2001}
W.~Wang and T.~Yang.
\newblock The pointwise estimates of solutions for {E}uler equations with
  damping in multi-dimensions.
\newblock {\em J. Differential Equations}, 173(2):410--450, 2001.

\bibitem{Wei.2021}
D.~Wei.
\newblock Diffusion and mixing in fluid flow via the resolvent estimate.
\newblock {\em Sci. China Math.}, 64(3):507--518, 2021.

\bibitem{Wei.201901}
D.~Wei and Z.~Zhang.
\newblock Enhanced dissipation for the {K}olmogorov flow via the hypocoercivity
  method.
\newblock {\em Sci. China Math.}, 62(6):1219--1232, 2019.

\bibitem{Zhang.2018}
D.~Wei, Z.~Zhang, and W.~Zhao.
\newblock Linear inviscid damping for a class of monotone shear flow in
  {S}obolev spaces.
\newblock {\em Comm. Pure Appl. Math.}, 71(4):617--687, 2018.

\bibitem{Zhang.201902}
D.~Wei, Z.~Zhang, and W.~Zhao.
\newblock Linear inviscid damping and vorticity depletion for shear flows.
\newblock {\em Ann. PDE}, 5(1):Paper No. 3, 101, 2019.

\bibitem{Wei.2020}
D.~Wei, Z.~Zhang, and W.~Zhao.
\newblock Linear inviscid damping and enhanced dissipation for the {K}olmogorov
  flow.
\newblock {\em Adv. Math.}, 362:106963, 103, 2020.

\bibitem{Zi.2021}
L.~Zeng, Z.~Zhang, and R.~Zi.
\newblock Suppression of blow-up in {P}atlak-{K}eller-{S}egel-{N}avier-{S}tokes
  system via the {C}ouette flow.
\newblock {\em J. Funct. Anal.}, 280(10):108967, 2021.

\bibitem{Zillinger.2016}
C.~Zillinger.
\newblock Linear inviscid damping for monotone shear flows in a finite periodic
  channel, boundary effects, blow-up and critical {S}obolev regularity.
\newblock {\em Arch. Ration. Mech. Anal.}, 221(3):1449--1509, 2016.

\bibitem{Zillinger.2017}
C.~Zillinger.
\newblock Linear inviscid damping for monotone shear flows.
\newblock {\em Trans. Amer. Math. Soc.}, 369(12):8799--8855, 2017.

\end{thebibliography}

\end{document}